\numberwithin{equation}{section}
    \newfont{\bg}{cmr10 scaled\magstep5}
    \newcommand{\bigzerou}{\smash{\lower1.7ex\hbox{\bg 0}}}
    \newcommand{\M}{ \mathcal{M}}
    \newcommand{\q}{{q}}
    \newcommand{\g}{\textbf{g}}
    \newcommand{\x}{{x}}
    \newcommand{\y}{{y}}
    \newcommand{\n}{{n}}
    \newcommand{\HH}{\textbf{H}}
\begin{document}

\title{Truncation Error Analysis for an Accurate Nonlocal Approximation to Manifold Poisson Models with Dirichlet Boundary
\thanks{This work was supported by NSFC grant 12071244, 11671005.}
        }

% \subtitle{None}

\author{
Yajie Zhang
\thanks{Department of Statistics and Mathematics, Zhongnan University of Economics and Laws, Wuhan, China,
430000 \textit{Email: Z0005149@zuel.edu.cn}
}
\and
Zuoqiang Shi
\thanks{Corresponding Author, Department of Mathematical Sciences, Tsinghua University, Beijing, China,
100084. \textit{Email: zqshi@tsinghua.edu.cn}}
}

\maketitle

\begin{abstract}

 In this work, we introduced a class of nonlocal models to accurately approximate the Poisson model on manifolds that are embedded in high dimensional Euclid spaces with Dirichlet boundary.
In comparison to the existing nonlocal approximation to Poisson models, instead of utilizing volumetric boundary constraint to reduce the truncation error to its local counterpart, we rely on the Poisson equation itself along the boundary to explicitly express the second order normal derivative by some geometry-based terms, so that to create a new model with $\mathcal{O}(\delta)$ truncation error along the $2\delta-$boundary layer and $\mathcal{O}(\delta^2)$ at interior, with $\delta$ be the nonlocal interaction horizon.
Our concentration is on the construction and the truncation error analysis of such nonlocal model.  The control on the truncation error is currently optimal among all nonlocal models, and is sufficient to attain second order localization rate that will be derived in our subsequent work.
\end{abstract}

\begin{keywords}
Manifold Poisson equation, Dirichlet boundary, nonlocal approximation, truncation error analysis, second order convergence.
\end{keywords}

\begin{AMS}
45P05; 45A05; 35A15; 46E35
\end{AMS}

\section{Introduction.}
Partial differential equations on manifolds have never failed to attract researchers in the past decades. Its application includes material science \cite{CFP97} \cite{EE08} , fluid flow \cite{GT09} \cite{JL04}, and biology physics \cite{BEM11}  \cite{ES10} \cite{NMWI11}. In addition, many recent work on machine learning   \cite{belkin2003led} \cite{Coifman05geometricdiffusions}  \cite{LZ17}  \cite{MCL16}  \cite{reuter06dna} and image processing   \cite{CLL15}    \cite{Gu04}  \cite{KLO17}  \cite{LWYGL14}   \cite{LDMM}   \cite{Peyre09}    \cite{Lui11} relates their model with manifold PDEs. One frequently used manifold model is the Poisson equation. Apart from its broad application, such equation is mathematically interesting by itself since it reveals much information of the manifold. 
Among all the recent study on the numerical analysis of the Poisson models, one approach is its nonlocal approximation. The main advantage for nonlocal model is that the use of explicit spatial differential operators is avoided, hence new numerical schemes can be explored, for instance, the point integral method(PIM). In spite of the large quantity of articles on the nonlocal approximation of Poisson model on Euclid spaces, less work have been done on its extension from Euclid spaces to manifolds. Due to the demand of numerical methods for manifold PDEs, it is necessary to construct one nonlocal model that approximates the manifold Poisson equation with high accuracy, hence to be able to solve it by proper numerical scheme.

In this paper, our purpose is to seek for a nonlocal model that approximates the following Poisson problem with Dirichlet boundary with high accuracy:
\begin{equation}  \label{bg01}
\begin{cases}
-\Delta_{\M} u(\x)=f(\x) & \x \in \M, \\
u=0  & \x \in \partial \M.
\end{cases}
\end{equation}
Here $\M$ is a compact, smooth $m$ dimensional manifold embedded in $\mathbb{R}^d$, with $\partial \M$ a smooth $(m-1)$ dimensional curve with bounded curvature. $f$ is an $H^2$ function on $\M$, 
$\Delta_{\M}$ is the Laplace-Beltrami operator on $\M$. We next state the definition of $\Delta_{\M}$: 
Let $\Phi: \Omega \subset \mathcal{R}^m \to \M \subset \mathcal{R}^d$ be a local parametrization of $\M$ and $\theta \in \Omega$, then $\Delta_{\M}$ is defined as 
\begin{equation}
\Delta_{\M} u(\Phi(\theta))=div \ \nabla_{\M} u(\Phi(\theta)),
\end{equation}
where the gradient $\nabla_{\M} $ on $\M$ is
\begin{equation} \label{nablaF}
\nabla_{\M} f (\Phi(\theta)) = \sum \limits_{i,j=1}^m g^{ij}(\theta) \frac{\partial \Phi}{\partial \theta_i} (\theta) \frac{\partial f(\Phi(\theta))}{\partial \theta_j} (\theta)
\end{equation}
for any differentiable function $f: \M \to \mathcal{R}$; 
and the divergence operator on $\M$ is defined as
\begin{equation} \label{divF}
div(F)=\frac{1}{\sqrt{det \ G}} \sum \limits_{k=1}^d \sum \limits_{i,j=1}^m \frac{\partial}{\partial \theta_i} (\sqrt{det \ G} g^{ij} F^k(\Phi(\theta)) \frac{\partial \phi^k}{\partial \theta_j})
\end{equation}
for any vector field $F: \M \to \mathcal{T}_\x \M$ on $\M$. Here $\mathcal{T}_{\x} \M$ is the tangent space of $\M$ at $\x=\Phi(\theta) \in \M$, 
$(F^1(\x),..., F^d(\x))^t$ is the representation of $F$ in the embedding coordinates, 
and $G(\theta)=(g_{ij})_{i,j=1,2,...,m}$ is the first fundamental form which is defined by
\begin{equation} \label{ggg}
g_{ij} (\theta)= \sum \limits_{k=1}^d \frac{\partial \phi^k}{\partial \theta_i} (\theta) \frac{\partial \phi^k}{\partial \theta_j} (\theta), \qquad i,j=1,...,m.
\end{equation}
And $(g^{ij})_{i,j=1,...,k}=G^{-1}$, $det \ G$ is the determinant of matrix $G$. 

In the past decade, researchers devoted on the construction of nonlocal Poisson models on Euclid domains, among which the most commonly studied equation is
\begin{equation} \label{intro1}
\frac{1}{\delta^2} \int_{\Omega} (u_{\delta}(\x)-u_{\delta}(\y) ) R_\delta(\x,\y) d \y=f(\x), \qquad \x \in \Omega.
\end{equation}
Here $\Omega \subset \mathbb{R}^k$ is a bounded Euclid domain with smooth boundary,  $f \in H^2(\Omega)$, $\delta$ is the nonlocal horizon parameter that describes the range of nonlocal interaction, $R_{\delta}(\x, \y) =C_{\delta} R \big( \frac{| \x-\y| ^2}{4 \delta^2} \big)$ is the nonlocal kernel function,
where $R \in C^2(\mathbb{R}^+) \cap L^1 [0, \infty)$ is a properly-chosen positive function with compact support, and  $C_{\delta}=\frac{1}{(4\pi\delta^2)^{k/2}}$ is the normalization factor. Such equation usually appeared in the discussion of peridynamics models \cite{Yunzhe4} \cite{Yunzhe8} \cite{Yunzhe13} \cite{Yunzhe27} \cite{Yunzhe31} \cite{Yunzhe32}.
 The truncation error of \eqref{intro1} to its local counterpart  $\Delta u=f$ has been proved to be $\mathcal{O}(\delta^2)$ in the interior region away from boundary, and $\mathcal{O}(\delta^{-1})$ in the $2\delta-$layer along the boundary. Efforts have been made to reduce the order of truncation error by making modification of \eqref{intro1} along the boundary layer.
See \cite{Yunzhe5} \cite{Yunzhe6} \cite{Yunzhe12} \cite{Yunzhe14} for Neumann boundary condition and  \cite{Yunzhe2} \cite{book-nonlocal} \cite{Du-SIAM} \cite{Yunzhe25}  \cite{ZD10} for other types of boundary conditions. 
Other related modification of \eqref{intro1} involves fractional nonlocal kernel \cite{fractional1} or nonlocal gradient terms \cite{Stokes1}. Those studies yield to at most $\mathcal{O}(\delta)$ convergence rate from $u_{\delta}$ to $u$. 

 As a breakthrough,  in one dimensional \cite{Yunzhe} and two dimensional  \cite{Neumann_2nd_order} cases,  the nonlocal models with $\mathcal{O} (\delta^2)$ convergence rate to its local counterpart were successfully constructed under Neumann boundary condition. One year later, Lee H. and Du Q. in \cite{Leehwi} introduced a nonlocal model under Dirichlet boundary condition by imposing a special volumetric constraint along the boundary layer, which assures $\mathcal{O} (\delta^2)$ convergence rate in 1d segment and 2d plain disk.

Nevertheless, due to the demand of massive calculation, less work have been done on the extension of nonlocal Poisson model from Euclid space into manifold. This restricted the exploration of new numerical methods for manifold PDEs.
In 2018, nonlocal Poisson model on manifold was first studied in \cite{Base1}  in the case of Neumann boundary. Its subsequent works describe the manifold nonlocal models with Dirichlet boundary \cite{Base2} \cite{WangTangJun} , with interface \cite{Yjcms1} , and with anisotropic coefficients \cite{Zqaniso}. In \cite{Base1}, we constructed a nonlocal Poisson model by utilizing the following approximation:
\begin{equation}  \label{base}
\!\!\!\!\!\!\!\! -\int_{\mathcal{M}} \Delta_{\mathcal{M}} u(\y) \bar{R}_{\delta}(\x,\y) d \mu_{\y} \approx \int_\M \frac{1}{\delta^2} R_{\delta}(\x,\y) (u(\x)-u(\y)) d \mu_{\y} -2 \int_{\partial \mathcal{M}} \bar{R}_{\delta}(\x,\y) \frac{\partial u}{\partial \n} (\y) d \tau_\y,
\end{equation}
 Here $\M$ is the manifold, $u \in H^4(\M)$, $\frac{\partial u}{\partial \n} =\nabla_{\M} u \cdot \n$, $d \mu_\y$ and $d \tau_\y$ are the volume form of $\M$ and $\partial \M$. The kernel functions $R_{\delta}(\x, \y) =C_{\delta} R \big( \frac{| \x-\y| ^2}{4 \delta^2} \big)$, $ \bar{R}_{\delta}(\x, \y) =C_{\delta} \bar{R} \big( \frac{| \x-\y| ^2}{4 \delta^2} \big)$ and $\bar{R}(r)=\int_r^{+\infty} R(s) ds$. The error of \eqref{base} has been rawly analyzed in \cite{Base1}.
 Such model has convergence rate $\mathcal{O}(\delta)$ to its local counterpart. The same rate was reached in \cite{LSS}, where the Dirichlet boundary was approximated by Robin condition.

Due to the demand of high accuracy for nonlocal approximation, one natural question to consider is the existence of nonlocal model that approaches the manifold Poisson equation in a rate of $\mathcal{O} (\delta^2)$, specifically, with Dirichlet boundary \eqref{bg01} as we are studying in this work. Since such rate has been obtained in low dimensional Euclid space, we can expect to extend it into high dimensional manifolds by exploiting a new approximation instead of \eqref{base}. Our innovation is that, first, we show the error of \eqref{base} in the interior region is $\mathcal{O}(\delta^2)$ using classical Cauchy-Lebniz formula, which agrees to the case with Euclid domains. Secondly, at the boundary layer, 
 by regarding the normal derivative in \eqref{base} as a variable, we are able to reduce the order of error in the $2\delta-$boundary layer  in \eqref{base} into $\mathcal{O}(\delta)$ after adding a term that involves the $2^{nd}$ normal derivative. According to the theory of normal curvature function, such term can be explicitly expressed by the first order derivative and Laplace-Beltrami operator. 
We can then set up an accurate nonlocal model accordingly.

  The main contribution of this work is the construction of nonlocal manifold Poisson model with Dirichlet boundary, and the control on the truncation error of model to its local counterpart. The well-posedness of model will be given in our subsequent papers. Our results can be easily generalized into the case with non-homogeneous Dirichlet boundary condition. To the author's best knowledge, even in the Euclid spaces, no work has ever appeared on the construction of nonlocal Poisson model with second order convergence under dimension $d \geq 3$, having such model will result in higher efficiency in the numerical implementation. In addition,  our model can be easily discretized and be solved by meshless scheme like point integral method(PIM, see \cite{LSS}) , which brings more convenience on solving manifold models numerically. On the contrary, we can not apply the finite-element method(FEM) on high dimensional manifold due to the complication of mesh construction. Besides, this work gives an orientation on how to handle nonlocal problems on manifolds.
 % we are the first to regard the conormal derivative as a variable in the nonlocal model, we are the first to  show second order convergence under dimension larger than 2. we are the first to point out the 2nd order truncation error in the interior of manifold
  
   The paper is organized as follows: we introduce the idea to formulate our model in section 2. In section 3, we controlled the truncation error of model on both the interior and the boundary. Some complicated calculation and proof have been moved into appendix.

\section{Nonlocal Model}

In this section, we briefly introduce how our model is formulated. Recall the Poisson model to be approximated:
\begin{equation}  \label{b01}
\begin{cases}
-\Delta_{\M} u(\x)=f(\x) & \x \in \M, \\
u=0  & \x \in \partial \M.
\end{cases}
\end{equation}
In our nonlocal model, the kernel function $R_{\delta}(\x, \y) =C_{\delta} R \big( \frac{| \x-\y| ^2}{4 \delta^2} \big)$ appeared in \eqref{intro1} plays a crucial role. A better choice of $R$ will result in much less work in the error analysis. To this end, we make the following assumptions on $R(r)$:
\begin{enumerate}
\item Smoothness: $\frac{d^2}{dr^2}R(r)$ is bounded, i.e., for any $r \geq 0$ we have $\big| \frac{d^2}{dr^2}R(r) \big| \leq C$;
\item Nonnegativity: $R(r)>0$ for any $r \geq 0$;
\item Compact support: $R(r)=0$ for any $r>1$;
\item Nondegenearcy: $\exists \ \delta_0 >0$ so that $R(r) \geq \delta_0 >0$ for $0 \leq r \leq 1/2$.
\end{enumerate}
These assumptions are very mild, almost all the smooth functions in the literature satisfy these conditions. The first condition makes $R$ not blow up near the origin, while the fourth condition assures a non-degenerate interval for $R$. Despite that the removal of third condition will make the problem more interesting, we keep this condition to avoid complication in the error analysis.
A frequently used kernel function $R$ is the following:
\begin{equation}
R(r)=
\begin{cases}
\frac{1}{2}(1+ \cos \pi r ), & 0 \leq r \leq 1, \\
0, & r>1.
\end{cases}
\end{equation}

Based on $R$, we write down the other two related functions $\bar{R}(r)=\int_{r}^{+\infty} R(s)ds$,  $\overset{=}{R}(r)=\int_r^{+\infty} \bar{R}(s)ds$ and their corresponding kernel functions
\begin{equation} \label{kernel}
\bar{R}_{\delta}(\x, \y) =C_{\delta} \bar{R} \big( \frac{| \x-\y| ^2}{4 \delta^2} \big) , \qquad  \overset{=}{R}_{\delta}(\x, \y) =C_{\delta} \overset{=}{R} \big( \frac{| \x-\y| ^2}{4 \delta^2} \big).
\end{equation}
It is clear that $\bar{R}$ and $\overset{=}{R}$ satisfy the assumptions 1-4 as well by replacing $\delta_0$  with some other positive constants in the fourth condition.

 In the next section, we will show the error of \eqref{base} can be improved by adding a term involving the second order normal derivative:
 \begin{equation}  \label{b0016}
 \begin{split}
 \!\!\!\!\! -\int_{\M} \Delta_{\M} u(\y) \bar{R}_{\delta}(\x,\y) d \mu_\y \approx \ & \frac{1}{\delta^2} \int_\M (u(\x)-u(\y)) R_{\delta}(\x,\y)  d \mu_\y -2 \int_{\partial \M}  \frac{\partial u}{\partial \n} (\y) \bar{R}_{\delta}(\x,\y) d \tau_\y \\
 & -  \int_{\partial \M} ((\x-\y) \cdot  \n(\y))    \frac{\partial^2 u}{\partial^2 \n} (\y)     \bar{R}_{\delta} (\x, \y) d \tau_\y, \qquad 
 \x \in \M.
 \end{split}
\end{equation}
Here $  \frac{\partial^2 u}{\partial^2 \n} =\sum \limits_{i=1}^d \sum \limits_{j=1}^d n^i n^j \nabla_{\M}^i \nabla_{\M}^j u $ and $n^i, \ n^j$ is the $i,j^{th}$ component of $\n$. For simplicity, we always write  $  \frac{\partial^2 u}{\partial^2 \n} $ as $n^i n^j \nabla^i \nabla^j u $ or $u_{\n \n}$. The last term of \eqref{b0016} is supported in the layer adjacent to the boundary with width $\delta$.

The main difficulty to apply \eqref{b0016} into a nonlocal model is the need of handling the term $\frac{\partial^2 u}{\partial^2 \n} $, which usually keep people away from improving the accuracy of model.
Fortunately, in the case when $u \equiv 0$ throughout the boundary, certain relation can be exploited between $\frac{\partial^2 u}{\partial^2 \n} $ and $\nabla_{\M} u$ after further study on the shape of boundary. In lemma \ref{equality} of section \ref{error1}, we will prove the following equality
 \begin{equation} \label{dd2}
 \frac{\partial^2 u}{\partial^2 \n} (\y) =\Delta_{\M} u(\y) + \kappa_{\n}(\y)  \frac{\partial u}{\partial \n} (\y), \qquad \forall \ \y \in \partial \M
 \end{equation}
whenever $u \in H^4(\M)$ and $u \equiv 0$ on $\partial \M$. This is fundamental in the geometric analysis. Here $\kappa_{\n}$ is a function that depends only on the shape of $\M$ and $\partial \M$. A detailed description of $\kappa_{\n}$ will be given in lemma \ref{equality}. Specially, in the two dimensional case where $m=2$, 
\begin{equation}
 \kappa_{\n} (\y) = \kappa(\y) (\n(\y) \cdot \n^b(\y))  , \qquad \forall \ \y \in \partial \M,
\end{equation}
where $\kappa(\y)$ denotes the curvature of $\partial \M$ at $\y$, and $\n^b(\y)$ denotes the unit principal normal vector of $\partial \M$ at $\y$. 

% In addition, we can derive the following important equality
% \begin{equation}
% \Delta_{\M} u(\y)= \frac{\partial^2 u}{\partial^2 \n} (\y) +  \frac{\partial^2 u}{\partial^2 \bm{\tau}} (\y). \qquad \forall \ \y \in \partial \M.
% \end{equation}
% The above two equation gives
% \begin{equation} \label{dd2}
%  \frac{\partial^2 u}{\partial^2 \n} (\y) = \Delta_{\M} u(\y)-\kappa(\y) (\n(\y) \cdot \n^b(\y))   \frac{\partial u}{\partial \n} (\y) , \qquad \forall \ \y \in \partial \M,
% \end{equation}

Based on \eqref{b0016} and \eqref{dd2}, we are ready to set up an equation that approximates the Poisson problem \eqref{b01}:  we express $ \frac{\partial^2 u}{\partial^2 \n} $ in \eqref{b0016} by the equality \eqref{dd2} and assign $\Delta_{\M} u$ as $-f$ in \eqref{b0016}, then equalize two sides of \eqref{b0016} after replacing $u$ and  $\frac{\partial u}{\partial  \n} $ by new variables $u_{\delta}$ and $v_{\delta}$  to write
\begin{equation} \label{cc01}
\mathcal{L}_{\delta} u_{\delta}(\x) - \mathcal{G}_{\delta} v_{\delta}(\x)
=  \mathcal{P}_{\delta} f(\x) \qquad \x \in \M,
\end{equation}

where the operators in \eqref{cc01} are defined as 
\begin{equation}
\mathcal{L}_{\delta} u_{\delta}(\x)=\frac{1}{ \delta^2} \int_{\M} (u_{\delta}(\x)-u_{\delta}(\y)) \ {R}_{\delta} (\x, \y) d \mu_\y ,
\end{equation}
\begin{equation} \label{gv}
 \mathcal{G}_{\delta} v_{\delta}(\x)= \int_{\partial \M} v_{\delta} (\y) \ (2+  (\x-\y) \cdot \kappa_{\n}  (\y) \n(\y) ) \ \bar{R}_{\delta} (\x, \y)  d \tau_\y ,
 \end{equation}
\begin{equation}
\mathcal{P}_{\delta} f(\x)= \int_{\M} f(\y) \ \bar{R}_{\delta} (\x, \y) d \mu_\y -\int_{\partial \M} ((\x-\y) \cdot  \n(\y) )  \ f(\y) \ \bar{R}_{\delta} (\x, \y) d \tau_\y.
 \end{equation}

% We then return to \eqref{b0016} to express $u_{\n\n}$ by \eqref{dd2} and write

%\begin{equation} \label{b16}
%\begin{split}
%-\int_{\M} \Delta_{\M} u_{\delta}(\y) \bar{R}_{\delta}(\x,\y) d \mu_\y \approx \ & \frac{1}{\delta^2} \int_\M (u_{\delta}(\x)-u_{\delta}(\y)) R_{\delta}(\x,\y)  d \mu_\y -2 \int_{\partial \M}  \frac{\partial u_{\delta}}{\partial \n} (\y) \bar{R}_{\delta}(\x,\y) d \tau_\y \\
%& -  \int_{\partial \M} ((\x-\y) \cdot  \n(\y)) ( \Delta_{\M} u_{\delta}(\y) + \kappa_{\n} (\y) \frac{\partial u_{\delta}}{\partial \n}(\y) ) \bar{R}_{\delta} (\x, \y) d \tau_\y. 
%\end{split}
%\end{equation}

We aim to approximate the solution $u$ of \eqref{b01} by $u_{\delta}$, and $\frac{\partial u}{\partial  \n} $ by $v_{\delta}$. Apparently, \eqref{cc01} is an equation for $(u_{\delta}, v_{\delta})$ and has infinitely many solutions, one more equation for $(u_{\delta}, v_{\delta})$ along the boundary is required to assure the uniqueness of solution.
To this end, it is necessary to explore additional relation between $u$ and $\frac{\partial u}{\partial \n}$. In view of the term $ \mathcal{G}_{\delta} v_{\delta}$ in \eqref{gv}, we write down the following two approximations
\begin{equation}  \label{app1}
\begin{split}
   2  \int_{\M}  \bar{R}_{\delta}(\x,\y) (u(\x)-u(\y)) d \mu_{\y} -4 \delta^2 \int_{\partial \mathcal{M}} \overset{=}{R}_{\delta}(\x,\y) \frac{\partial u}{\partial \n} (\x) d \tau_\y \\
\approx
-2 \delta^2 \int_{\mathcal{M}} \Delta_{\mathcal{M}} u(\y) \overset{=}{R}_{\delta}(\x,\y) d \mu_{\y} \qquad
\x \in \partial \M;
\end{split}
\end{equation}
\begin{equation} \label{app2}
\int_{\M} \big( (u(\x)-u(\y)- (\x-\y) \cdot \n(\x) \frac{\partial u}{\partial \n} (\x) \big) \ (\x-\y) \cdot \n(\x) \kappa_{\n}(\x) \bar{R}_{\delta} (\x,\y) d\y \approx 0 \qquad \x \in \partial \M.
\end{equation}
Here \eqref{app1} is a slight modification of \eqref{base}, and \eqref{app2} is a Taylor expansion residue in the normal direction.
The truncation error analysis will be given in the next section. Next, similar to \eqref{cc01}, we are ready to construct our second nonlocal equation that approximates \eqref{b01}: we assign $u(\x)$ in \eqref{app1}, \eqref{app2} as $0$ and $\Delta_{\M} u(\y)$ as $-f(\y)$, replace $u(\y)$ and $\frac{\partial u}{\partial  \n} $ by $u_{\delta}(\y)$ and $v_{\delta}$, then equalize two sides of \eqref{app1} and \eqref{app2} to sum up
\begin{equation} \label{cc02}
\mathcal{D}_{\delta} u_{\delta}(\x) + \tilde{R}_{\delta}(\x) v_{\delta}(\x) =  \mathcal{Q}_{\delta} f(\x),  \qquad \x \in \partial \M,
\end{equation}
where the operators are defined as
 \begin{equation} \label{du}
\mathcal{D}_{\delta} u_{\delta}(\x) = \int_{\M} u_{\delta} (\y) \ (2 - \ (\x-\y) \cdot \kappa_{\n} (\x) \n(\x) )  \ \bar{R}_{\delta} (\x, \y) d \mu_\y,
\end{equation}
\begin{equation} \label{tilder}
\tilde{R}_{\delta}(\x) =4 \delta^2 \int_{\partial \M}  \overset{=}{R}_{\delta} (\x, \y) d \tau_\y - \int_{\M} \kappa_{\n}(\x) \ ((\x-\y) \cdot n(\x) )^2 \ \bar{R}_{\delta} (\x,\y) d \mu_\y,
\end{equation}
 \begin{equation} \label{tilderq}
 \mathcal{Q}_{\delta} f(\x)=-2\delta^2 \int_{\M} f(\y) \ \overset{=}{R}_{\delta} (\x, \y) d \mu_\y.
\end{equation}

Here the term $\mathcal{D}_{\delta} u_{\delta}$ in \eqref{du} is constructed accordingly with $ \mathcal{G}_{\delta} v_{\delta}$ in \eqref{gv} to assure the elimination of cross terms in the weak formulation of model. 
Consequently, we set up our nonlocal model by combining \eqref{cc01} and \eqref{cc02}: 
\begin{equation} \label{c01}
\begin{cases}
\mathcal{L}_{\delta} u_{\delta}(\x) - \mathcal{G}_{\delta} v_{\delta}(\x)
=  \mathcal{P}_{\delta} f(\x) , &  \x \in \M, \\
\mathcal{D}_{\delta} u_{\delta}(\x) + \tilde{R}_{\delta}(\x) v_{\delta}(\x) =  \mathcal{Q}_{\delta} f(\x),  & \x \in \partial \M.
\end{cases}
\end{equation}
Our final destination is to show that \eqref{c01} has a unique solution pair $(u_{\delta}, v_{\delta})$ and $\left \lVert u-u_{\delta} \right \rVert_{H^1(\M)} \leq C \delta^2 \left \lVert f \right \rVert_{H^2(\M)}$ where $u$ solves \eqref{b01}.
Due to its complication in the coercivity analysis, we remain to solve such problem in our subsequent work. In the next section, we will concentrate on the control of the truncation error between \eqref{c01} and \eqref{b01}.

\section{Analysis of the Truncation Errors} \label{error1}

% In the previous section, we briefly described how our nonlocal model \eqref{c01} was formulated.  In order to present the convergence rate claimed in the theorem \ref{theorem2}, one crucial part is to control the truncation error between our model and the original Poisson equation \eqref{b01}. Let $u$ be the exact solution of \eqref{b01} and $\frac{\partial u}{\partial \n}$ be its normal derivative at $\partial \M$, we denote the truncation errors as

%The right hand side of \eqref{b001} and \eqref{b002} equals to $0$ after replacing $(u,  \frac{\partial u} {\partial {\n} } )$ with $(u_{\delta}, v_{\delta})$. Therefore, we want the terms $r_{in}$ and $r_{bd}$ to be sufficiently small so that our model can potentially reach high accuracy.

In this section, we state the main theorem of our paper, which is crucial for our subsequential papers.

\begin{theorem}  \label{Nonlocal_Model}
Let $u \in H^4(\M)$ be the solution of $\eqref{b01}$, and
\begin{equation} \label{b001}
r_{in}(\x)=\mathcal{L}_{\delta} u(\x) - \mathcal{G}_{\delta} \frac{\partial u} {\partial {\n} } (\x)
- \mathcal{P}_{\delta} f(\x) , \qquad  \x \in \M, 
\end{equation}
\begin{equation} \label{b002}
r_{bd}(\x)= \mathcal{D}_{\delta} u(\x) + \tilde{R}_{\delta}(\x) \frac{\partial u} {\partial {\n} } (\x) - \mathcal{Q}_{\delta} f(\x),  \qquad \x \in \partial \M.
\end{equation} 
 then we can decompose $r_{in}$ into $r_{in}=r_{it}+r_{bl}$, where $r_{it}$ is supported in the whole domain $\M$, with the following bound
\begin{equation}  \label{ddd1}
\frac{1}{\delta} \left \lVert r_{it} \right \rVert_{L^2(\M)} + \left \lVert \nabla r_{it} \right \rVert_{L^2(\M)}  \leq C \delta  \left \lVert u \right \rVert_{H^4(\M)};
\end{equation}
and $r_{bl}$ is supported in the layer adjacent to the boundary $\partial \M$ with width $2\delta$:
\begin{equation} \label{suppbl}
supp(r_{bl}) \subset \{ \x \ \big| \ \x \in \M, \ dist(\x, \partial \M) \leq 2 \delta \ \},
\end{equation}
 and satisfies the following two estimates
\begin{equation} \label{d07}
\frac{1}{\delta} \left \lVert r_{bl} \right \rVert_{L^2(\M)} + \left \lVert \nabla r_{bl} \right \rVert_{L^2(\M)} \leq C \delta^{\frac{1}{2}}   \left \lVert u \right \rVert_{H^4(\M)};
\end{equation}
\begin{equation} \label{symme}
\begin{split}
 \int_{\M}  r_{bl}(\x) \ f_1(\x) d \mu_\x \leq & C \delta^2 \left \lVert u \right \rVert_{H^4(\M)} ( \left \lVert  f_1\right \rVert_{H^1(\M)} + \left \lVert  \bar{f}_1\right \rVert_{H^1(\M)} +  \left \lVert  \overset{=}{f}_1\right \rVert_{H^1(\M)}  )  ,   \\ & \forall \ f_1 \in H^1(\M);
\end{split}
\end{equation}
where the notations $\nabla=\nabla_{\M}$, and
\begin{equation} \label{fff1}
\bar{f}_1(\x)= \frac{1}{\bar{\omega}_{\delta}(\x)} \int_{\M} f_1 (\y) \bar{R}_{\delta} (\x, \y) d \mu_\y, \qquad \overset{=}{f}_1(\x)=\frac{1}{\overset{=}{\omega}_{\delta}(\x)} \int_{\M} f_1(\x) \overset{=}{R}_{\delta}(\x,\y)d \mu_\y
\end{equation}
are the weighted average of $f_1$ in ${B}_{2\delta}(\x)$ with respect to $\bar{R}$ and $\overset{=}{R}$, and $$\bar{\omega}_{\delta}(\x)=\int_{\M} \bar{R}_{\delta}(\x, \y) d \mu_\y, \ \overset{=}{\omega}_{\delta}(\x)=\int_{\M} \overset{=}{R}_{\delta}(\x,\y)d \mu_\y, \qquad \forall \ \x \in \M. $$

In addition, we have the following estimate for $r_{bd}$:
\begin{equation} \label{fff2} 
\left \lVert r_{bd} \right \rVert_{L^2(\partial \M)} \leq C \delta^{\frac{5}{2}} \left \lVert u \right \rVert_{H^4( \M)} .
\end{equation}
The constant $C$ in \eqref{d07}, \eqref{symme} and \eqref{fff2} depend only on $\M$.
\end{theorem}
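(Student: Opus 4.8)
The plan is to expand every operator appearing in $r_{in}$ and $r_{bd}$ by Taylor expansion in the embedding coordinates, organize the residue according to its support, and estimate each piece separately. The essential tool is the following generalized Cauchy–Leibniz type identity: for any smooth enough $g$ on $\M$, one has a local expansion
\begin{equation} \label{pf-taylor}
g(\y) = g(\x) + (\y-\x)\cdot\nabla_{\M} g(\x) + \tfrac{1}{2}(\y-\x)^i(\y-\x)^j \nabla^i_{\M}\nabla^j_{\M} g(\x) + O(|\y-\x|^3 \|g\|_{C^3}),
\end{equation}
valid because the manifold is smooth and $|\y-\x|$ is comparable to the geodesic distance for $\y\in B_{2\delta}(\x)$. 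I would first treat the interior residue $\mathcal{L}_\delta u(\x) - \mathcal{P}_\delta f(\x)$ at points $\x$ with $\mathrm{dist}(\x,\partial\M) > 2\delta$, where the boundary integrals vanish: plugging \eqref{pf-taylor} into $\mathcal{L}_\delta$, the zeroth order term cancels, the first order term integrates to zero by the (near-)symmetry of $R_\delta(\x,\cdot)$ over $B_{2\delta}(\x)$ up to a curvature correction of size $O(\delta)$, and the second order term reproduces $-\int_\M \Delta_\M u(\y)\bar R_\delta(\x,\y)d\mu_\y = \mathcal{P}_\delta f(\x)$ exactly in the flat case and up to $O(\delta^2\|u\|_{H^4})$ on the manifold; the cubic remainder gives another $O(\delta^2\|u\|_{H^4})$. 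Cataloguing these gives $r_{it}$ together with \eqref{ddd1}, where the $\|\nabla r_{it}\|$ bound follows by differentiating under the integral sign — $\partial_\x$ hitting $R_\delta$ produces a factor $\delta^{-2}(\x-\y)$ which is $O(\delta^{-1})$ on the support, so the same $O(\delta^2)$ in $L^2$ becomes $O(\delta)$ after one derivative loses a power. (Strictly speaking $r_{it}$ is \emph{defined} as $r_{in}$ minus the boundary-layer contributions, so one also checks that its far-interior formula extends smoothly; I'd absorb the overlap region $\mathrm{dist}(\x,\partial\M)\in(\delta,2\delta)$ into $r_{bl}$.)

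Next I would handle the boundary layer $r_{bl}$, i.e. the portion of $r_{in}$ supported where $\mathrm{dist}(\x,\partial\M)\le 2\delta$, which is exactly where the defining approximation \eqref{b0016} together with Lemma \ref{equality} ($\partial^2_\n u = \Delta_\M u + \kappa_\n \partial_\n u$ on $\partial\M$, using $u\equiv 0$ there) must be exploited. Here I would localize near a boundary point, flatten the boundary, and expand $u$ using $u|_{\partial\M}=0$: the key point is that since $u$ vanishes on $\partial\M$, along the boundary layer $u$ itself is $O(\delta)$ and can be written as (signed distance)$\times\partial_\n u$ plus higher order, so many terms that are naively $O(1)$ are in fact small. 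Tracking \eqref{b0016} through this expansion: the leading normal-derivative term is captured by $2\int_{\partial\M}\partial_\n u\,\bar R_\delta$, the second-order normal term by the $(\x-\y)\cdot\n\,\partial^2_\n u$ integral, and — crucially — one should keep both the $\Delta_\M u$ and the $\kappa_\n\partial_\n u$ pieces of $\partial^2_\n u$ via \eqref{dd2}, which is how $\mathcal{G}_\delta$ acquires the factor $2 + (\x-\y)\cdot\kappa_\n\n$ and $\mathcal{P}_\delta$ its boundary term. What is left over is $O(\delta)$ pointwise in the $2\delta$-strip (versus $O(\delta^{-1})$ for the unmodified model \eqref{base}); since the strip has measure $O(\delta)$, this gives $\|r_{bl}\|_{L^2}\le C\delta^{3/2}\|u\|_{H^4}$, hence $\delta^{-1}\|r_{bl}\|_{L^2}\le C\delta^{1/2}\|u\|_{H^4}$, and the gradient loses one power to give $C\delta^{1/2}$ as well — this is \eqref{d07}.

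The sharper estimate \eqref{symme} is, I expect, the main obstacle, and it is the one that genuinely needs the ``symmetrization'' structure built into the model. Testing $r_{bl}$ against an arbitrary $f_1\in H^1(\M)$, the naive bound is $\|r_{bl}\|_{L^2}\|f_1\|_{L^2}=O(\delta^{3/2})\cdot O(1)$, which is not good enough; one must gain an extra half power. The mechanism is that the dominant part of $r_{bl}(\x)$ is of the form $\int_{\partial\M}(\text{something})(\y)K_\delta(\x,\y)d\tau_\y$ — a boundary convolution — so that $\int_\M r_{bl}(\x)f_1(\x)d\mu_\x$ can be rewritten, by Fubini, as a boundary integral against $\bar f_1$ or $\overset{=}{f}_1$ (the kernel-weighted averages defined in \eqref{fff1}), and \emph{then} one integrates by parts \emph{along the boundary} or uses that the remaining $\x$-integral of the odd part of the kernel over a half-ball is $O(\delta)$ smaller than expected. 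Concretely: the leading residue terms come in pairs (an interior-type piece and a boundary-type piece) whose sum, after testing against $f_1$ and swapping orders of integration, telescopes against the analogous decomposition of $\bar f_1$ or $\overset{=}{f}_1$, leaving only genuinely higher-order Taylor remainders. This is where the specific coefficients $2+(\x-\y)\cdot\kappa_\n\n$ in $\mathcal{G}_\delta$, $2-(\x-\y)\cdot\kappa_\n\n$ in $\mathcal{D}_\delta$, and the $\overset{=}{R}$-kernel in \eqref{app1} were reverse-engineered to make things cancel; so the proof of \eqref{symme} is essentially a bookkeeping of these cancellations, and the half-power gain comes from the standard fact that $\int_{B_{2\delta}(\x)\cap\M}(\y-\x)R_\delta(\x,\y)d\mu_\y$, which would be exactly zero for a symmetric domain, is only $O(\delta)$ near the boundary rather than $O(1)$, combined with one application of Cauchy–Schwarz that distributes $\|f_1\|_{H^1}$ plus the two averaged norms $\|\bar f_1\|_{H^1}, \|\overset{=}{f}_1\|_{H^1}$ across the terms. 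Finally, \eqref{fff2} for $r_{bd}$ is obtained the same way but is easier: $r_{bd}$ lives only on $\partial\M$, the approximations \eqref{app1}–\eqref{app2} are designed so their residues are pure Taylor remainders of order $\delta^3$ pointwise on $\partial\M$ (note the $4\delta^2$ and $2\delta^2$ prefactors, which already carry two powers), so with the $(m-1)$-dimensional boundary measure one lands at $\|r_{bd}\|_{L^2(\partial\M)}\le C\delta^{5/2}\|u\|_{H^4}$; the detailed expansions I would relegate to the appendix, as the authors indicate.
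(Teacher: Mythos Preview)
Your overall strategy (Taylor expand, sort by support, estimate separately) matches the paper's, but the interior power--count is wrong and this is where the real work lies. You claim the first--order Taylor term is handled by ``near--symmetry up to a curvature correction of size $O(\delta)$'' and that ``the cubic remainder gives another $O(\delta^2\|u\|_{H^4})$.'' Neither is correct on a curved manifold: both odd terms are only $O(\delta)$ pointwise after dividing by $\delta^2$, because the curvature correction to $\int_{\M}(\y-\x)R_\delta\,d\mu_\y$ is $O(\delta^3)$, not $O(\delta^4)$. That would give only $\|r_{it}\|_{L^2}=O(\delta)$, one power short of \eqref{ddd1}. The paper recovers the missing power by a different mechanism: it expands around $\y$ (not $\x$), so that the first--order piece $\frac{1}{\delta^2}\int_\M(\x-\y)\cdot\nabla u(\y)R_\delta\,d\mu_\y$ is killed \emph{exactly} by an integration--by--parts identity (this is $r_6\equiv 0$), and then the cubic remainder $r_1$ is further split as $d_1+d_2+d_3+d_{11}+d_{12}+d_{13}$ in local coordinates. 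The quartic pieces $d_{1i}$ go straight into $I_1$; the genuinely cubic pieces $d_1,d_2,d_3$ are each integrated by parts in the parametric domain using $\xi^{i}R_\delta = -2\delta^2 g^{il}\partial_l^{\bm\alpha}\bar R_\delta+\text{l.o.t.}$, which trades a $\xi$--factor for either a derivative on the coefficients (gaining $\delta^2$, hence $I_1$) or a boundary integral (hence $I_2$). This IBP--in--local--coordinates engine is what actually produces the decomposition $r_{in}=r_{it}+r_{bl}$ with the stated orders; your symmetry argument does not see it.

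Two smaller points. For \eqref{symme}, the paper's key tool is Lemma~\ref{baselemma}, which controls $\int_{\partial\M}\int_\M f_1(\x)\,\g_1(\y)\cdot(\x-\y-\bm\eta(\x,\y))R_\delta\,d\mu_\x d\tau_\y$ by $\delta^2(\|f_1\|_{H^1}+\|\bar f_1\|_{H^1})\|\g_1\|_{H^1}$; the gain comes specifically from the extrinsic/parametric mismatch $\x-\y-\bm\eta$ being one order smaller than $\x-\y$, followed by one more IBP. Your ``Fubini + integrate by parts along the boundary'' is the right instinct but misses the role of $\bm\eta$. For \eqref{fff2}, the paper's decomposition of $r_{bd}$ contains a piece $2\delta^2\bar r_{in}$ --- the full interior residue with kernel $\bar R$ in place of $R$ --- whose $L^2(\partial\M)$ norm is bounded via the trace of its $H^1(\M)$ norm, i.e.\ it \emph{reuses} the $r_{in}$ analysis. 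So $r_{bd}$ is not a standalone third--order Taylor remainder as you suggest; its estimate is parasitic on the interior estimate.
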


% This theorem gives a complete control on the truncation errors $r_{in}$ and $r_{bd}$, and is a crucial step for presenting the convergence results claimed in the theorem \ref{theorem2}.

 In general, this theorem states that $r_{in}$ is $\mathcal{O}(\delta^2)$ in the interior region, and is $\mathcal{O}(\delta)$ in the boundary layer, while $r_{bd}$ is $\mathcal{O}(\delta^{\frac{5}{2}})$. In fact, $r_{in}$ is essentially the error of the approximation \eqref{b0016}. Compared with $r$ in \cite{Base1} that denotes the error of \eqref{base}, $r_{in}$ is smaller in all aspects. Correspondingly, it brings more complication in the proof of theorem \ref{Nonlocal_Model} since we need to further analyze the second order truncation terms. 

To prove theorem \eqref{Nonlocal_Model}, we introduce a special local parametrization of $\M$.
We let $\delta$ be sufficiently small so that it is less than $0.1$ times the minimum reaches of $\M$ and $\partial \M$. For the definition of the reach of the manifold, one can refer to the first part of section 5 of \cite{Base1}.  Under such assumption, for each $\x \in \M$, its neighborhood $B_{\x}^{4\delta}$ is holomorphic to the Euclid space $\mathbb{R}^m$.
Since $\M$ is compact, there exists a $2\delta$-net, $\mathcal{N}_{\delta}=\{ \q_i \in \M, i=1,...,N \}$, such that
\begin{equation}
\M \subset \bigcup \limits_{i=1}^N B_{\q_i}^{2\delta},
\end{equation}
and there exists a partition of $\M$, $\{ \mathcal{O}_i, \ i=1,..., N \}$, such that $\mathcal{O}_i \cap \mathcal{O}_j =\emptyset, \ \forall \ i \neq j$ and 
\begin{equation}
\M=\bigcup \limits_{i=1}^N \mathcal{O}_i, \ \mathcal{O}_i \subset B_{\q_i}^{2\delta}, \ i=1,2,...,N.
\end{equation}

\begin{lemma}
There exist a parametrization $\bm{\phi}_i: \Omega_i \subset \mathbb{R}^m \to U_i \subset \M, \ i=1,2,...,N$, such that
\begin{enumerate}
\item (Convexity)  $B_{\q_i}^{4 \delta} \cap \M \subset U_i $ and $\Omega_i$ is convex,
\item (Smoothness) $\bm{\phi}_i \in C^4(\Omega_i)$,
\item (Local small deformation) For any points $\theta^1, \theta^2 \in \Omega_i$.
\begin{equation}
\frac{1}{2} | \theta^1-\theta^2| \leq \left \lVert \bm{\phi}_i (\theta^1) -\bm{\phi}_i (\theta^2 ) \right \rVert \leq 2 |\theta^1- \theta^2|. 
\end{equation}
\end{enumerate}
\end{lemma}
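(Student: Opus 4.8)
The plan is to realize each chart $\bm{\phi}_i$ as the inverse of the orthogonal projection of $\M$ onto its affine tangent plane at $\q_i$. Write $\tau$ for the minimum of the reaches of $\M$ and $\partial\M$, so that $\delta<0.1\,\tau$ by hypothesis, let $\pi_i:\mathbb{R}^d\to T_{\q_i}\M$ be the linear orthogonal projection, and let $P_i(\x):=\q_i+\pi_i(\x-\q_i)$ be the associated affine projection onto $\q_i+T_{\q_i}\M$, which we identify isometrically with $\mathbb{R}^m$ so that $P_i(\q_i)$ is the origin. The one nontrivial input, which I would quote from the reach estimates recalled in Section~5 of \cite{Base1} (and ultimately from Federer's theory of sets of positive reach), consists of two facts valid on $\M\cap B_{\q_i}^{10\delta}$: (i) a tangent-plane deviation bound $\angle\bigl(T_{\q_i}\M,\,T_{\y}\M\bigr)\le C\,|\y-\q_i|/\tau$, so that the singular values of $\pi_i|_{T_{\y}\M}$ — the cosines of the principal angles between $T_{\q_i}\M$ and $T_{\y}\M$ — all lie in $[\,1-C'\delta^2/\tau^2,\,1\,]\subset[\tfrac12,1]$ once $\delta$ is small enough; and (ii) injectivity of the restriction $P_i|_{\M\cap B_{\q_i}^{10\delta}}$. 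By (i) the differential of $P_i|_{\M}$ at $\y$, which is exactly $\pi_i|_{T_{\y}\M}$, is invertible at every such $\y$; since $\M$ is $C^4$ the inclusion $\M\hookrightarrow\mathbb{R}^d$, hence $P_i|_{\M}$, is $C^4$, so by the inverse function theorem $P_i$ carries $\M\cap B_{\q_i}^{10\delta}$ diffeomorphically onto an open subset of $\mathbb{R}^m$ by a $C^4$ map whose inverse is $C^4$.

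I would then set $\Omega_i:=B^m(0,5\delta)$ (a Euclidean ball, hence convex), define $\bm{\phi}_i$ to be the restriction to $\Omega_i$ of $\bigl(P_i|_{\M\cap B_{\q_i}^{10\delta}}\bigr)^{-1}$, and put $U_i:=\bm{\phi}_i(\Omega_i)$. This is consistent: the differential bound forces $\lVert D\bm{\phi}_i\rVert\le 2$ pointwise (it is the inverse of $\pi_i|_{T\M}$, whose smallest singular value is $\ge\tfrac12$), so $U_i\subset B_{\q_i}^{10\delta}\cap\M$, inside the region where $P_i$ is a diffeomorphism, and $U_i$ is open in $\M$. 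Property~2 is then immediate. For the inclusion in Property~1, take $\y\in B_{\q_i}^{4\delta}\cap\M$: since $P_i$ is a $1$-Lipschitz orthogonal projection, $|P_i(\y)|=|P_i(\y)-P_i(\q_i)|\le|\y-\q_i|\le 4\delta<5\delta$, so $P_i(\y)\in\Omega_i$; and $\y$ is the unique point of $\M\cap B_{\q_i}^{10\delta}$ projecting to $P_i(\y)$ by (ii), whence $\y=\bm{\phi}_i(P_i(\y))\in U_i$. Convexity of $\Omega_i$ being clear, Property~1 follows.

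Property~3 is checked on all of $\Omega_i$, where convexity does the work. For the upper bound, given $\theta^1,\theta^2\in\Omega_i$ the segment joining them lies in $\Omega_i$ and
\begin{equation*}
\bm{\phi}_i(\theta^1)-\bm{\phi}_i(\theta^2)=\int_0^1 D\bm{\phi}_i\bigl(\theta^2+t(\theta^1-\theta^2)\bigr)\,(\theta^1-\theta^2)\,dt ,
\end{equation*}
hence $\bigl\lVert\bm{\phi}_i(\theta^1)-\bm{\phi}_i(\theta^2)\bigr\rVert\le\sup_{\theta\in\Omega_i}\lVert D\bm{\phi}_i(\theta)\rVert\,|\theta^1-\theta^2|\le 2\,|\theta^1-\theta^2|$. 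For the lower bound I would not integrate but use that $P_i$ is $1$-Lipschitz together with $P_i\circ\bm{\phi}_i=\mathrm{id}_{\Omega_i}$:
\begin{equation*}
|\theta^1-\theta^2|=\bigl|P_i\bigl(\bm{\phi}_i(\theta^1)\bigr)-P_i\bigl(\bm{\phi}_i(\theta^2)\bigr)\bigr|\le\bigl\lVert\bm{\phi}_i(\theta^1)-\bm{\phi}_i(\theta^2)\bigr\rVert ,
\end{equation*}
which already gives more than the asserted factor $\tfrac12$.

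The step I expect to carry the real content is the first one: extracting from the reach of $\M$ both the injectivity of the tangent-plane projection at scale $\sim\delta$ and the quantitative bound $\angle(T_{\q_i}\M,T_{\y}\M)=O(\delta/\tau)$ that pins the singular values of $\pi_i|_{T\M}$ into $[\tfrac12,1]$; the threshold $\delta<0.1\,\tau$ — shrunk to a smaller absolute constant should the constants $C,C'$ require it — is precisely what makes all of this hold, uniformly in $i\in\{1,\dots,N\}$. Everything after that is the chain rule and the $1$-Lipschitz estimate displayed above. I would also note explicitly that the constants involved, and hence the construction itself, depend only on $\M$ — through $\tau$ and uniform $C^4$ bounds on a finite reference atlas — which is the form in which the lemma is used in Theorem~\ref{Nonlocal_Model}.
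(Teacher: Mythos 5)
Your construction has a genuine gap: it ignores the boundary $\partial \M$. You take $\Omega_i=B^m(0,5\delta)$ and define $\bm{\phi}_i$ as the inverse of the tangent-plane projection $P_i$ restricted to $\M\cap B_{\q_i}^{10\delta}$, but this inverse is only defined on the image $P_i(\M\cap B_{\q_i}^{10\delta})$, and you never check that this image contains the full ball $\Omega_i$. For interior points (say $dist(\q_i,\partial\M)\geq 6\delta$) that containment is true and easy to supply, but the $2\delta$-net necessarily contains points $\q_i$ with $dist(\q_i,\partial\M)<5\delta$ (indeed on $\partial\M$), and for those the set $\M\cap B_{\q_i}^{10\delta}$ is cut off by the boundary, so its projection is roughly a half-ball: $\bm{\phi}_i$ is simply undefined on a large portion of your convex $\Omega_i$, and Properties 2 and 3, which are quantified over all of $\Omega_i$, cannot even be stated for your map. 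The obvious repair, taking $\Omega_i:=P_i(\M\cap B_{\q_i}^{r})$, destroys Property 1, since the projection of $\partial\M$ has curvature of order $1/\tau\neq 0$ and can bound a non-convex region; and extending $\M$ past $\partial\M$ to make the inverse projection defined would violate $U_i\subset\M$. So the real content near the boundary --- producing a chart defined on a genuinely convex parameter domain whose image stays in $\M$ and still covers $B_{\q_i}^{4\delta}\cap\M$ --- is exactly the part your argument does not address, and it is needed: the appendix repeatedly integrates by parts over $\Omega_{J(\x)}$ and uses the boundary terms on $\partial\Omega_{J(\x)}$, so boundary charts are essential.

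For what it is worth, the paper does not reprove this lemma at all; it quotes it as a corollary of Proposition 1 of \cite{Base1}, whose construction is designed to handle the boundary case. Away from $\partial\M$ your argument is the standard one (graph/projection chart from the reach, singular values of $\pi_i|_{T_{\y}\M}$ pinned near $1$, chain rule for the upper bound, $1$-Lipschitzness of $P_i$ for the lower bound) and those parts are fine; the missing idea is the treatment of charts centered within $O(\delta)$ of $\partial\M$, e.g.\ via coordinates adapted to (flattening) the boundary, for which the quantitative smallness of $\delta$ relative to the reaches of both $\M$ and $\partial\M$ is precisely what is needed.
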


This lemma is a corollary of the proposition 1 of \cite{Base1}. Such lemma gives a parametrization of $\M$ on each of $N$ pieces. It indicates that for each $\x \in \M$, there is a unique index $J(\x) \in \{1,2,...,N\}$ such that $\x \in \mathcal{O}_{J(\x)}$, while $B_{\x}^{2\delta} \in U_{J(\x)}$. For any $\y \in B_{\x}^{2\delta} $, we define the function
\begin{equation} \label{xxxi}
\bm{\xi}(\x,\y)= \phi_{J(\x)}^{-1} (\y)-\phi_{J(\x)}^{-1} (\x),
\end{equation}
and the auxiliary function $\eta(\x,\y)=< \eta_1, \eta_2, ..., \eta_d >$, where
\begin{equation} \label{eeeta}
\eta^j(\x,\y)= \sum \limits_{k=1}^m \xi^i(\x,\y)   \partial_k  \phi^j_{J(\x)} (\y), \qquad j \in \{1,2,...,d \}.
\end{equation}
Here $\bm{\eta}=\bm{\eta}(\x,\y)$ is an auxiliary function that approximates the vector $(\y-\x)$ in the tangential plane $T_{\M}(\x)$.
In the following content, we sometimes omit the symbol of summation, for instance, we write $\sum \limits_{k=1}^m  \xi^k(\x,\y)   \partial_i  \phi^j_{J(\x)} (\y) $ as $\xi^k(\x,\y)   \partial_k \phi^j_{J(\x)} (\y) $.
To simplify our notations, we always write $\bm{\alpha}$ to denote $ \phi_{J(\x)}^{-1} (\y)$, and $\bm{\beta}$ to denote $\phi_{J(\x)}^{-1} (\x)$, with $\bm{\xi}=\bm{\beta}-\bm{\alpha}$.

Besides, the following 2 lemmas are necessary in the proof of theorem \ref{Nonlocal_Model}.

\begin{lemma} \label{equality}
 For each $\x \in \partial \M$, we have the following equality (equation \eqref{dd2}) hold:
\begin{equation} \label{m05}
 \frac{\partial^2 u}{\partial \n^2}(\x) =  \Delta_\M u(\x)+  \kappa_{\n}(\x)  \ \frac{\partial u}{\partial \n} (\x),
\end{equation}
where $\kappa_{\n}(\x)$ is $(m-1)$ times the mean curvature of the $(m-1)$ dimensional hyper-surface $\mathcal{P}_{\x}( \partial \M)$ at $\x$ with normal direction $\n(\x)$, and $\mathcal{P}_{\x}( \partial \M)$ represents the projection of $\partial \M$ onto the $m$ dimensional Euclid space $\mathcal{T}_{\x} \M  $. In addition, let $\Psi: \Sigma \subset \mathbb{R}^{m-1}  \to  \partial \M \subset \mathbb{R}^d $ be a local parametrization of $\partial \M $ near $\x$ with $\Psi(\omega)=\x$, and $H(\omega)=(h_{ij})_{i,j=1,2,...,m-1}$ be the first fundamental form of $\Psi$ with
\begin{equation} \label{hij}
h_{ij} (\omega)= \sum \limits_{k=1}^d \frac{\partial \psi^k}{\partial \omega_i} (\omega) \frac{\partial \psi^k}{\partial \omega_j} (\omega), \qquad i,j=1,...,m-1;
\end{equation}
and let $L(\omega)=(l_{ij})_{i,j=1,2,...,m-1}$ with
\begin{equation} \label{lij}
l_{ij} (\omega)= \sum \limits_{k=1}^d \frac{\partial^2 \psi^k}{\partial \omega_i \partial \omega_j} (\omega) n^k (\omega), \qquad i,j=1,...,m-1;
\end{equation}
then we have the following explicit form of $\kappa_{\n}$:
\begin{equation} \label{kappan1}
\kappa_{\n}(\x)=\sum \limits_{i=1}^{m-1}  \sum \limits_{j=1}^{m-1} h^{ij}(\omega) l_{ij}(\omega),
\end{equation}
where $h^{ij}_{i,j=1,2,...,m-1}=H^{-1}$.
\end{lemma}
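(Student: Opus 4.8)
Let me think about this. We want to prove that for $u \in H^4(\M)$ with $u \equiv 0$ on $\partial\M$, the second normal derivative satisfies $u_{\mathbf{n}\mathbf{n}}(\x) = \Delta_\M u(\x) + \kappa_{\n}(\x)\, u_{\mathbf{n}}(\x)$ on the boundary.

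The key idea: at a point $\x \in \partial\M$, choose coordinates adapted to the geometry. Pick an orthonormal basis of $T_\x\M$ where the first $m-1$ vectors $e_1, \dots, e_{m-1}$ span $T_\x(\partial\M)$ and $e_m = \mathbf{n}(\x)$ is the inward conormal. Then $\Delta_\M u(\x) = \sum_{i=1}^m \nabla^i\nabla^i u(\x)$ in normal coordinates (the Christoffel symbols vanish at the center). The term $\nabla^m\nabla^m u = u_{\mathbf{n}\mathbf{n}}$ is already on both sides, so the claim reduces to showing $\sum_{i=1}^{m-1}\nabla^i\nabla^i u(\x) = \kappa_\n(\x)\, u_\n(\x)$; i.e., the tangential part of the Hessian trace equals $\kappa_\n u_\n$.

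Now use $u \equiv 0$ on $\partial\M$. Parametrize $\partial\M$ near $\x$ by $\Psi:\Sigma\subset\R^{m-1}\to\partial\M$, $\Psi(\omega)=\x$. Since $u\circ\Psi \equiv 0$, differentiating twice in $\omega_i, \omega_j$ gives $0 = \partial_i\partial_j(u\circ\Psi) = \sum_{k,l}\nabla^k\nabla^l u\,\partial_i\psi^k\partial_j\psi^l + \sum_k \nabla^k u\,\partial_i\partial_j\psi^k$ (the covariant Hessian of $u$ on $\M$, restricted). Because $u \equiv 0$ on $\partial\M$, the ambient gradient $\nabla_\M u(\x)$ is normal to $\partial\M$, hence $\nabla_\M u(\x) = u_\n(\x)\,\mathbf{n}(\x)$. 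So $\sum_k \nabla^k u\,\partial_i\partial_j\psi^k = u_\n(\x)\sum_k n^k\partial_i\partial_j\psi^k = u_\n(\x)\, l_{ij}(\omega)$ by \eqref{lij}. Therefore
\begin{equation*}
\sum_{k,l}\nabla^k\nabla^l u(\x)\,\partial_i\psi^k\partial_j\psi^l = -u_\n(\x)\, l_{ij}(\omega).
\end{equation*}
Contracting both sides with $h^{ij}(\omega)$ (the inverse first fundamental form of $\Psi$) and noting that $\sum_{i,j}h^{ij}\partial_i\psi^k\partial_j\psi^l$ is exactly the orthogonal projection onto $T_\x(\partial\M)$ applied coordinate-wise, the left side becomes $\sum_{\text{tangential}}\nabla^i\nabla^i u(\x)$, i.e. the trace of the Hessian over $T_\x(\partial\M)$, while the right side is $-u_\n(\x)\sum_{i,j}h^{ij}l_{ij} = -\kappa_\n(\x)\,u_\n(\x)$ by \eqref{kappan1}. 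Combining with the decomposition $\Delta_\M u = u_{\mathbf{n}\mathbf{n}} + \operatorname{tr}_{T_\x\partial\M}\nabla^2 u$ yields \eqref{m05}. Finally, to identify $\sum h^{ij}l_{ij}$ as $(m-1)$ times the mean curvature of $\mathcal{P}_\x(\partial\M)$ at $\x$ with respect to $\mathbf{n}(\x)$, observe that $L$ is the second fundamental form of $\partial\M\subset\R^d$ in the $\mathbf{n}(\x)$ direction evaluated at $\x$, and since $\mathbf{n}(\x)\in T_\x\M$, projecting $\partial\M$ onto $T_\x\M$ does not change the second fundamental form in the $\mathbf{n}(\x)$ direction at the point $\x$ itself (the projection is a second-order-tangent map there); hence $h^{ij}l_{ij}$ is the trace of the shape operator of $\mathcal{P}_\x(\partial\M)$, which is $(m-1)$ times its mean curvature.

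The main obstacle I anticipate is bookkeeping the covariant versus ordinary derivatives cleanly: the identity $\partial_i\partial_j(u\circ\Psi)=0$ produces ordinary second derivatives of the composite, and one must carefully insert and cancel the Christoffel terms of $\M$ (which is why working at the center of normal coordinates, or alternatively carrying the $\M$-Christoffel symbols and checking they recombine into the covariant Hessian plus the stated curvature term) is essential. A secondary subtlety is justifying the geometric interpretation of $\kappa_\n$ via the projection $\mathcal{P}_\x$, which requires verifying that the projection onto $T_\x\M$ alters $\partial\M$ only to second order near $\x$, so the relevant second fundamental form is unchanged at $\x$; this is a standard but slightly delicate local computation that may be deferred to the appendix. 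The $H^4$ regularity is more than enough for all the pointwise manipulations on $\partial\M$ after the usual trace/Sobolev embedding arguments.
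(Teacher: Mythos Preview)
Your proposal is correct and follows essentially the same strategy as the paper: both differentiate the identity $u\equiv 0$ twice along $\partial\M$, use $\nabla_\M u = u_\n\,\n$ on the boundary, and split $\Delta_\M u$ into its normal part $u_{\n\n}$ and the tangential Hessian trace to obtain \eqref{m05}. The only cosmetic difference is that the paper carries this out via an orthonormal frame $\{\bm{\tau}_i\}$ of $T_\x(\partial\M)$ and auxiliary curves $\bm{\gamma}_i$ (introducing a matrix $\Gamma$ with $\Gamma\Gamma^T=H^{-1}$), whereas you work directly with the parametrization $\Psi$ and contract with $h^{ij}$; your observation that $\sum_{i,j}h^{ij}\partial_i\psi^k\partial_j\psi^l$ is the orthogonal projector onto $T_\x(\partial\M)$ is precisely the content of that matrix identity.
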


The proof of such lemma is given in section \ref{bs0lemma} of appendix. 

%By applying this lemma to the term $r'_3$ in \eqref{rp3}, we can rewrite $r_3$ in \eqref{qqr3} as
%\begin{equation} \label{qr3}
%r_3= \int_{\partial \M}  n^j \eta^i ( \nabla^i \nabla^j u(\y)) \bar{R}_\delta (\x, \y) d \tau_y-
%\int_{\partial \M} ((\x-\y) \cdot  \n(\y))  \frac{\partial^2 u}{\partial \n^2}(\y)  \bar{R}_{\delta} (\x, \y) d \tau_\y.
%\end{equation}

% The above lemma indicates that the new $r_3$ is a higher order term compared to the old $r_3$ in \cite{Base1} after eliminating the first term of \eqref{rrr3}. This is the crucial reason why the error can be reduced after adding the second order term in \eqref{b0016}. The proof of such lemma is presented in the section \ref{bs0lemma} of appendix.

% Apparently, $r_1, r_3, r_4$ can be decomposed into many terms, some of them can be classified as $r_{it}$ and some as $r_{bl}$. The control of $r_{it}$ terms are comparably easier since only interior terms are included. Nevertheless, we need  several tools to control $r_{bl}$. The next lemma can be applied to several $r_{bl}$ terms that composes $r_1$, $r_3$ and $r_4$. It gives an orientation on how to handle boundary layer terms using integration by parts on the parametric domain. 

\begin{lemma} \label{baselemma}
Let $f_1: \M \to \mathbb{R}^1$,   $\g_1: \M \to \mathbb{R}^m$. If  $f_1 \in H^1(\M)$ and $\g_1 \in [H^1(\M)]^m$, then we have
\begin{equation}
\begin{split}
\int_{\partial \M}  \int_\M  f_1(\x) \g_1(\y) \cdot & (\x-\y-\bm{\eta}(\x,\y)) R_\delta(\x,\y) d \mu_\x d \tau_\y 
\\
& \leq  C \delta^2 ( \left \lVert f_1 \right \rVert_{H^1(\M)} + \left \lVert \bar{f}_1 \right \rVert_{H^1(\M)} ) \left \lVert \g_1 \right \rVert_{H^1(\M)} ,
\end{split}
\end{equation}
where $\bar{f}_1$ is the function defined in \eqref{fff1}, and $\bm{\eta}$ is defined in \eqref{eeeta}.
\end{lemma}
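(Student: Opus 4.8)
The plan is to estimate the quantity
\[
I := \int_{\partial \M} \int_{\M} f_1(\x)\, \g_1(\y) \cdot (\x - \y - \bm{\eta}(\x,\y))\, R_\delta(\x,\y)\, d\mu_\x\, d\tau_\y
\]
by exploiting the fact that $\x - \y - \bm{\eta}(\x,\y)$ is the residue of approximating the embedding-coordinate secant vector by its tangential lift, and hence vanishes to higher order in $\bm{\xi}(\x,\y)$. First I would recall from the definition \eqref{eeeta} and a Taylor expansion of the parametrization $\bm{\phi}_{J(\x)}$ that, on $B_\x^{2\delta}$, one has componentwise $x^j - y^j - \eta^j(\x,\y) = -\frac{1}{2}\sum_{k,l} \xi^k \xi^l \,\partial_k \partial_l \phi^j_{J(\x)}(\y) + O(|\bm{\xi}|^3)$, so that $|\x - \y - \bm{\eta}(\x,\y)| \le C|\bm{\xi}(\x,\y)|^2 \le C|\x - \y|^2 \le C\delta^2$ on the support of $R_\delta$, using the local small-deformation bound of the parametrization lemma. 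This already gives the pointwise gain of $\delta^2$; the remaining issue is purely that the $d\mu_\x\, d\tau_\y$ double integral of $|f_1(\x)||\g_1(\y)| R_\delta(\x,\y)$ is not by itself controlled by $\|f_1\|_{H^1}\|\g_1\|_{H^1}$ with an $O(1)$ constant — the boundary integration in $\y$ only sees a $2\delta$-layer, and a crude bound would cost a negative power of $\delta$.

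The key device to absorb that loss is the weighted average $\bar f_1$ appearing in \eqref{fff1}: since $\bar R = \int_r^\infty R$, the kernel $R_\delta(\x,\y)$ can be written (up to the geometric factor $\sqrt{\det G}$) as a derivative of $\bar R_\delta$ along the radial variable, which lets one integrate by parts / use the co-area formula to trade one power of the $2\delta$-thin boundary integral for a factor $\delta$ and replace $f_1(\x)$ by a combination of $f_1$, $\nabla f_1$ and its average $\bar f_1$ over $B_{2\delta}(\x)$. Concretely I would split $f_1(\x) = (f_1(\x) - \bar f_1(\x)) + \bar f_1(\x)$; for the mean-zero-type difference one uses a Poincaré-type inequality on $B_{2\delta}(\x)$ giving $\|f_1 - \bar f_1\|_{L^2} \le C\delta \|\nabla f_1\|_{L^2}$, which supplies the missing $\delta$, and for the $\bar f_1$ piece one uses that $\bar f_1$ is already a smoothed quantity whose boundary-layer $L^2$ norm is controlled by $\delta^{1/2}\|\bar f_1\|_{H^1(\M)}$ via a trace-type estimate on the $2\delta$-collar. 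Running the same bookkeeping on the $\y$-side (or rather keeping $\g_1$ on $\partial\M$ and controlling $\int_{\partial\M}|\g_1(\y)|(\cdots) d\tau_\y$ by a trace estimate $\|\g_1\|_{L^2(\partial\M)} \le C\|\g_1\|_{H^1(\M)}$) together with the $\delta^2$ pointwise gain and Cauchy--Schwarz in the collar of volume $O(\delta)$ then collects the final bound $C\delta^2(\|f_1\|_{H^1} + \|\bar f_1\|_{H^1})\|\g_1\|_{H^1}$.

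I would organize the write-up as: (i) the Taylor residue estimate for $\x - \y - \bm{\eta}$, done in the local coordinates $\bm{\alpha},\bm{\beta}$ of the parametrization lemma, relegating the index gymnastics to the appendix; (ii) the reduction, via the $\bar R$–$R$ relation and co-area formula, of the boundary--interior double integral to an interior integral against the collar with a gained factor of $\delta$; (iii) the Poincaré/averaging split of $f_1$ and the trace estimates on the $2\delta$-collar; (iv) assembling by Cauchy--Schwarz. The main obstacle I expect is step (ii)/(iii): honestly accounting for the anisotropy of the collar near $\partial\M$ and the fact that one factor lives on the $(m-1)$-dimensional boundary while the other lives on $\M$, so that the naive scaling $\delta^{-1}$ from the thin layer must be cancelled \emph{exactly} and not just up to logarithms — this is where the choice of $\bar R$ as an antiderivative of $R$, and the introduction of the averaged function $\bar f_1$ into the statement, are essential, and it is the reason the estimate is stated with $\|\bar f_1\|_{H^1}$ on the right-hand side rather than $\|f_1\|_{H^1}$ alone.
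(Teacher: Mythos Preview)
Your step (i) matches the paper: expand $\x-\y-\bm\eta$ via Taylor in the local parametrization to obtain the quadratic residue $\tfrac12\xi^{i'}\xi^{j'}\partial_{i'}\partial_{j'}\phi^j(\bm\alpha)$ plus an $O(|\bm\xi|^3)$ remainder (this is exactly \eqref{lm461}).

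From there the two arguments diverge. The paper does \emph{not} cash in the pointwise $\delta^2$ bound and then estimate $\int_{\partial\M}\int_\M |f_1||\g_1|R_\delta$ by the functional-analytic devices you describe. Instead it keeps the explicit quadratic structure, uses the identity $\xi^{i'}R_\delta = -2\delta^2 g^{i'l}\partial_l^{\bm\beta}\bar R_\delta + \text{l.o.t.}$ (equation \eqref{m06}) to convert one factor of $\xi$ into a $\bm\beta$-derivative of $\bar R_\delta$, and integrates by parts in $\x$. When that derivative falls on the remaining $\xi^{j'}$, the surviving inner integral is $\int_\M f_1(\x)\bar R_\delta(\x,\y)\,d\mu_\x = \bar\omega_\delta(\y)\,\bar f_1(\y)$, and \emph{that} is where $\bar f_1$ enters the proof; Cauchy--Schwarz on $\partial\M$ plus the trace theorem then gives $\|\g_1\|_{H^1}\|\bar f_1\|_{H^1}$. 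The boundary term generated by the integration by parts is handled by a second pass using $\overset{=}R_\delta$. So the ingredients you list (the $R$--$\bar R$ relation, integration by parts, the appearance of $\bar f_1$) are indeed the paper's ingredients, but the assembly is different: the paper integrates by parts \emph{before} extracting powers of $\delta$ and reads $\bar f_1$ directly off the resulting kernel integral, whereas you propose to extract $\delta^2$ pointwise first and introduce $\bar f_1$ afterwards via a Poincar\'e split.

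Separately, your motivating claim that ``a crude bound would cost a negative power of $\delta$'' is not correct, and this leads you to over-engineer steps (ii)--(iii). After the pointwise $\delta^2$ gain, Cauchy--Schwarz together with $\int_{\partial\M}R_\delta(\x,\y)\,d\tau_\y\le C\delta^{-1}$ (supported in the $2\delta$-collar), the collar estimate $\|f_1\|_{L^2(\text{collar}_{2\delta})}^2\le C\delta\|f_1\|_{H^1(\M)}^2$, and the trace bound $\|\g_1\|_{L^2(\partial\M)}\le C\|\g_1\|_{H^1(\M)}$ already yield $|I|\le C\delta^2\|f_1\|_{H^1}\|\g_1\|_{H^1}$ with no $\bar f_1$ at all. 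Your Poincar\'e split $f_1=(f_1-\bar f_1)+\bar f_1$ is therefore valid but unnecessary, and step (ii) as stated (``co-area formula to trade a thin boundary integral for a factor $\delta$'') is too vague to be a proof step and is in any case not needed. If you want your write-up to reproduce the paper's stated right-hand side with $\|\bar f_1\|_{H^1}$, the cleanest route is the paper's integration-by-parts-in-$\x$ argument rather than your split.
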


The proof of lemma can be found in section \ref{bslemma} of appendix.
Next, we are ready to prove the main theorem of this section.

\begin{proof}[Proof of Theorem  \ref{Nonlocal_Model}]
Let us first study $r_{in}$. Recall \eqref{b001} and the definition of $\mathcal{L}_{\delta}$,  $\mathcal{G}_{\delta} $ and $\mathcal{P}_{\delta}$ in \eqref{cc01}, we replace $f$ into $\Delta_{\M} u$ to write
 \begin{equation}  \label{rinin}
 \begin{split}
& r_{in}(\x)= -\int_{\M} \Delta_{\M} u(\y) \bar{R}_{\delta}(\x,\y) d \mu_\y -  \frac{1}{\delta^2} \int_\M (u(\x)-u(\y)) R_{\delta}(\x,\y)  d \mu_\y  \\ 
& -2 \int_{\partial \M}  \frac{\partial u}{\partial \n} (\y) \bar{R}_{\delta}(\x,\y) d \tau_\y 
  -  \int_{\partial \M} ((\x-\y) \cdot  \n(\y)) (  \Delta_\M u(\y)+  \kappa_{\n}(\y)  \ \frac{\partial u}{\partial \n} (\y))    \bar{R}_{\delta} (\x, \y) d \tau_\y.
 \end{split}
\end{equation}

Next, we decompose $r_{in}$ into $r_{in}=r_1+r_2+r_3+r_4+r_5+r_6$, where

\begin{equation}
r_1(\x)=\frac{1}{\delta^2} \int_\M (u(\x)-u(\y) - (\x-\y) \cdot \nabla u(\y) - \frac{1}{2} \eta^i \eta^j (\nabla^i \nabla^j u(\y)) ) R_\delta (\x,\y) d \mu_\y,
\end{equation}

\begin{equation}
r_2(\x)=\frac{1}{2\delta^2} \int_\M \eta^i \eta^j (\nabla^i \nabla^j u(\y) ) R_\delta(\x,\y) d \mu_\y - \int_\M \eta^i (\nabla^i \nabla^j u(\y) ) \nabla^j \bar{R}_\delta (\x,\y) d \mu_\y,
\end{equation}

\begin{equation} \label{qqr3}
\begin{split}
r_3(\x)=& \int_\M \eta^i (\nabla^i \nabla^j u(\y) ) \nabla^j \bar{R}_\delta (\x,\y) d \mu_\y+ \int_\M div \ (\eta^i (\nabla^i \nabla u(\y) ) \bar{R}_\delta(\x,\y) d \mu_\y \\
& - \int_{\partial \M} ((\x-\y) \cdot  \n(\y))  \frac{\partial^2 u}{\partial \n^2}(\y)   \bar{R}_{\delta} (\x, \y) d \tau_\y ,
\end{split}
\end{equation}

\begin{equation}
 r_4(\x)=- \int_\M div \ (\eta^i (\nabla^i \nabla u(\y) ) \bar{R}_\delta(\x,\y) d \mu_\y - \int_\M \Delta_\M u(\y) \bar{R}_\delta(\x,\y) d \mu_\y,
\end{equation}

\begin{equation} \label{rp3}
r_5(\x)= \int_{\partial \M} ((\x-\y) \cdot  \n(\y)) (  \frac{\partial^2 u}{\partial \n^2}(\y)-\Delta_{\M} u(\y) - \kappa_{\n}(\y) \frac{\partial u}{\partial \n}(\y) ) \bar{R}_{\delta} (\x, \y) d \tau_\y,
 \end{equation}

\begin{equation} 
\begin{split}
r_6(\x)=  & -2\int_{\partial \M} \frac{\partial u}{\partial \n}(\y)  \bar{R}_{\delta} (\x, \y) d \tau_\y
+2  \int_\M \Delta_{\M} u(\y) \bar{R}_\delta(\x,\y) d \mu_\y \\ &
+ \frac{1}{\delta^2} \int_\M (\x-\y) \cdot \nabla u(\y) R_\delta (\x,\y) d \mu_\y.
 \end{split}
 \end{equation}

We have $r_5 \equiv 0$ according to the lemma \ref{equality}, and $r_6 \equiv 0$ is the equality above (22) in \cite{Base1}. The terms $r_1$, $r_2$ and $r_4$ are exactly $r_1$, $r_2$ and $r_4$ in \cite{Base1}. 
For the term $r_2$, The equation $(27)$ of \cite{Base1} directly gives
\begin{equation}
\int_\M | r_2(\x)|^2 d \mu_\x \leq C \delta^2 \left \lVert u \right \rVert_{H^3(\M)}^2,
\end{equation}
\begin{equation}
\int_\M | \nabla r_2(\x)|^2 d \mu_\x \leq C \delta \left \lVert u \right \rVert_{H^3(\M)}^2.
\end{equation}
 Hence it satisfies the control \eqref{ddd1}. 
What left is to control $r_1$, $r_3$ and $r_4$. To simplify the proof, we use $I_1$ to denote the set of terms that satisfy the control \eqref{ddd1}, and $I_2$ to denote the set of terms that satisfy the control
 \eqref{suppbl} \eqref{d07} and \eqref{symme}.
 
\begin{enumerate}
\item Let us first study $r_1$, recall
\begin{equation} \label{rrrrr1}
r_1(\x)=\frac{1}{\delta^2} \int_\M (u(\x)-u(\y) - (\x-\y) \cdot \nabla u(\y) - \frac{1}{2} \eta^i \eta^j (\nabla^i \nabla^j u(\y)) ) R_\delta (\x,\y) d \mu_\y,
\end{equation}
In \cite{Base1}, $r_1(\x)$ is written as the sum of 3 triple-integral terms in the parametric domain $\Omega_{J(\x)}$ using Cauchy-Leibniz formula. For this theorem, we need to further decompose it into 3 first order terms plus 3 quadruple-integral terms, where the latter terms can be classified as $I_2$. By several steps of integration by parts in $\Omega_{J(\x)}$, the former terms can be further divided into $I_1$ part and $I_2$ part. Due its complication in the calculation, we have moved the control of $r_1$ into appendix \ref{bs2lemma}.

% this term was decomposed into three lower order terms using Cauchy-Leibniz formula, and the lower order terms have been shown to be $\mathcal{O}(\delta)$. In this paper, we further decompose these 3 terms again using Cauchy-Leibniz formula and get three $\mathcal{O}(\delta)$ terms and three $\mathcal{O}(\delta^2)$ terms. After integration by parts on the parametrized domain $\Omega$ for several times, each of the three $\mathcal{O}(\delta)$ terms can be split into two parts, while one satisfies \eqref{ddd1} and another satisfies \eqref{d07} and \eqref{symme}. we move the control of $r_1$ into appendix due to its complicated calculation.

 \item 
Next, let us analyze $r_3$. We have
\begin{equation} \label{qr31}
\begin{split}
& \!\!\!\!\!\!\!\!\!\!\!  r_3= \int_{\partial \M}  n^j \eta^i ( \nabla^i \nabla^j u(\y)) \bar{R}_\delta (\x, \y) d \tau_y-
\int_{\partial \M} ((\x-\y) \cdot  \n(\y))  \frac{\partial^2 u}{\partial \n^2}(\y)  \bar{R}_{\delta} (\x, \y) d \tau_\y \\
= &\int_{\partial \M} \HH_u: (\x-\y) \otimes \n(\y) \  \bar{R}_\delta (\x, \y) d \tau_y -  \int_{\partial \M} ((\x-\y) \cdot  \n(\y))  \frac{\partial^2 u}{\partial \n^2}(\y)  \bar{R}_{\delta} (\x, \y) d \tau_\y \\
& -\int_{\partial \M} \HH_u: (\x-\y-\bm{\eta}(\x,\y)) \otimes \n(\y) \  \bar{R}_\delta (\x, \y) d \tau_\y \\
= & \int_{\partial \M} \HH_u: \big( (\x-\y)- ((\x-\y) \cdot \n(\y)) \n(\y)  \big) \otimes \n(\y) \  \bar{R}_\delta (\x, \y) d \tau_\y \\
& -\int_{\partial \M} \HH_u: (\x-\y-\bm{\eta}(\x,\y)) \otimes \n(\y) \  \bar{R}_\delta (\x, \y) d \tau_\y.
\end{split}
\end{equation}
For the second term of \eqref{qr31}, we directly apply the lemma \ref{baselemma} to obtain 
\begin{equation}
\begin{split}
\int_{\M} f_1(\x) \int_{\partial \M} & \HH_u: \n(\y) \otimes ( (\x-\y)- \bm{\eta}(\x,\y))    \bar{R}_\delta (\x, \y) d \tau_{\y} 
\\ & \leq C( \left \lVert f_1 \right \rVert_{H^1(\M)} + \left \lVert \overset{=}{f}_1 \right \rVert_{H^1(\M)} ) \left \lVert u \right \rVert_{H^4(\M)}, \ \forall \ f_1 \in H^1(\M).
\end{split}
\end{equation}
In addition, since we have the following decomposition
\begin{equation}
\begin{split}
& \int_{\partial \M} \HH_u: (\x-\y-\bm{\eta}(\x,\y)) \otimes \n(\y) \  \bar{R}_\delta (\x, \y) d \tau_\y
\\ 
 = & \int_{\partial \M} \nabla^i \nabla^j u(\y) \ n^j  \xi^{i'} \xi^{j'}  \int_0^1 \int_0^1  s \partial_{j'} \partial_{i'} {\phi}^i (\bm{\alpha}+ \tau s \bm{\xi} ) d\tau ds \bar{R}_\delta (\x, \y) d \tau_\y \\
 = & \int_{\partial \M} \nabla^i \nabla^j u(\y) \ n^j  \xi^{i'} \xi^{j'}  \frac{1}{2}  \partial_{j'} \partial_{i'} {\phi}^i (\bm{\alpha})  \bar{R}_\delta (\x, \y) d \tau_\y + \\
& \!\!\!\!\!\!\!\!\!\!\!\!\!\!\!\!\!\!\!\!\!\!\!\!\!\!\!\!\!\!\!\!\!\! \int_{\partial \M}   \nabla^i \nabla^j u(\y) \ n^j  \xi^{i'} \xi^{j'} \xi^{k'} (\int_0^1 \int_0^1 \int_0^1 s^2 \tau \partial_{j'} \partial_{i'} \partial_{k'} \phi^i (\bm{\alpha}+\tau s t \bm{\xi}) dt d\tau ds) {R}_\delta(\x,\y)  d \tau_\y ,
\end{split}
\end{equation}
we can follow the control on $d_{221} $ in \eqref{d0221} \eqref{d00221}, and the control on $d_{11}$ in \eqref{d1i} \eqref{d1i2} to obtain the bound \eqref{d07} for the above two terms.  The second term of \eqref{qr31} therefore belongs to $I_2$,

On the other hand, what remains to show is that the first term of \eqref{qr31} belongs to $I_2$. We denote $\mathcal{T}'_{\y}: \mathbb{R}^d \to \mathbb{R}^d$ as the projection map from $\mathbb{R}^d$ onto the space $\mathcal{T}_{\y}(\partial \M)$, while  $\mathcal{T}_{\y}(\partial \M)$ represents the tangent space of the manifold $\partial \M$ at $\y$.
By noticing that $(\x-\y)- ((\x-\y) \cdot \n(\y)) \n(\y) =\mathcal{T}'_{\y} (\x-\y)$, we can do the following integration by parts:
\begin{equation} \label{rt4}
\begin{split}
 & \int_{\partial \M} \HH_u: \big( (\x-\y)- ((\x-\y) \cdot \n(\y)) \n(\y)  \big) \otimes \n(\y) \  \bar{R}_\delta (\x, \y) d \tau_\y \\
 = &   \int_{\partial \M} \HH_u: \n(\y)  \otimes  \mathcal{T}'_{\y} (\x-\y)  \  \bar{R}_\delta (\x, \y) d \tau_\y \\
 = &  \int_{\partial \M} \HH_u: \n(\y)  \otimes  (  \nabla_{\partial \M}^{\y}  \overset{=}{R}_\delta (\x, \y) ) d \tau_\y \\
 = &  \int_{\partial \M} \nabla^i \nabla^j u (\y) \n^j(\y)  (  \nabla_{\partial \M}^{\y}  \overset{=}{R}_\delta (\x, \y) )^i d \tau_\y \\
 = & \int_{\partial \M}   \n^j(\y)  \nabla ( \nabla^j u (\y)  ) \cdot  \nabla_{\partial \M}^{\y}  \overset{=}{R}_\delta (\x, \y)  d \tau_\y \\
 = & \int_{\partial \M}   \n^j(\y) \nabla_{\partial \M}   ( \nabla^j u (\y)  ) \cdot  \nabla_{\partial \M}^{\y}  \overset{=}{R}_\delta (\x, \y)  d \tau_\y \\
  = & -\int_{\partial \M} \nabla_{\partial \M} \cdot (  \n^j(\y) \nabla_{\partial \M}   ( \nabla^j u (\y)  ) )  \overset{=}{R}_\delta (\x, \y)  d \tau_\y \\
 = & - \int_{\partial \M}   \n^j(\y)   \nabla_{\partial \M} \cdot  \nabla_{\partial \M} ( \nabla^j u (\y)) \   \overset{=}{R}_\delta (\x, \y)  d \tau_\y \\
 =& -\int_{\partial \M}   \n^j(\y) \Delta_{\partial \M} (\nabla^j u(\y)  ) \ \overset{=}{R}_\delta (\x, \y)  d \tau_\y ,
 \end{split}
\end{equation}
here we have used the fact that $\nabla_{\partial \M} \cdot \n (\y) \equiv 0$ in the second last equality,  and $\mathcal{T}'_{\y} \textbf{a} \cdot \textbf{b}= \mathcal{T}'_{\y} \textbf{a} \cdot \mathcal{T}'_{\y} \textbf{b}$ for any vector $ \textbf{a}, \textbf{b} \in \mathbb{R}^d$ in the fifth equality. While in the last equality, $\Delta_{\partial \M} =  \nabla_{\partial \M} \cdot  \nabla_{\partial \M} $ is given in the equation $(30)$ of \cite{Base1}.

Consequently, we can mimic the control on $d_{221}$ in \eqref{d0221}, \eqref{d00221}, \eqref{d000221} in the appendix \ref{bs2lemma}  to obtain the bound \eqref{d07} and \eqref{symme} for the term \eqref{rt4}.

\item for $r_4$, we have from (33) of \cite{Base1} that
\begin{equation}
\!\!\!\!\!\! r_4(\x)=-\int_\M \frac{\xi^l}{\sqrt{det \ G(\y) }} \partial_{i'} \big( \sqrt{det \ G} g^{i'j'} (\partial_{j'} \phi^j) (\partial_l \phi^i) (\nabla^i \nabla^j u(\y)) \big) \bar{R}_\delta(\x,\y) d \mu_\y.
\end{equation}
We apply \eqref{m06} to discover
\begin{equation} \label{r4cont}
\begin{split}
 &\!\!\!\!\!\!\!\!\!\!\!\!\!\!\!\! \!\!\!\!\!\! r_4 (\x)=  \int_\M \frac{1}{\sqrt{det \ G(\y) }} \partial_{i'} \big( \sqrt{det \ G(\y) } g^{i'j'} (\partial_{j'} \phi^j) (\partial_l \phi^i) (\nabla^i \nabla^j u(\y)) \big) \xi^{l} \bar{R}_\delta(\x,\y) d\mu_\y \\
& =2\delta^2  \int_\M \frac{1}{\sqrt{det \ G(\y) }} \partial_{i'} \big( \sqrt{det \ G(\y) } g^{i'j'} (\partial_{j'} \phi^j) (\partial_l \phi^i) (\nabla^i \nabla^j u(\y)) \big) g^{k'l} (\y)  \\ & 
\partial^{\bm{\alpha}}_{k'} \overset{=}{R}_\delta(\x,\y)  d\mu_\y  +r_{40} \\
& \!\!\!\!\!\!\!\!\!\!\!\!\!  =  2\delta^2 \int_{\Omega}  \partial_{i'} \big( \sqrt{det \ G(\y) } g^{i'j'} (\partial_{j'} \phi^j) (\partial_l \phi^i) (\nabla^i \nabla^j u(\y)) \big) g^{k'l} (\y) \partial^{\bm{\alpha}}_{k'} \overset{=}{R}_\delta(\x,\y) d \bm{\alpha} +r_{40} \\
 & \!\!\!\!\!\!\!\! \!\!\!\!\!=  - 2\delta^2 \int_\Omega \partial_{k'} \Big( \partial_{i'} \big( \sqrt{det \ G(\y) } g^{i'j'} (\partial_{j'} \phi^j) (\partial_l \phi^i) (\nabla^i \nabla^j u(\y)) \big) g^{k'l} (\y) \Big) \overset{=}{R}_\delta(\x,\y) d \bm{\alpha}  \\
& \!\!\!\!\!\!\!\! \!\!\!\!\! +  2\delta^2 \int_{\partial \Omega}  \partial_{i'} \big( \sqrt{det \ G(\y) } g^{i'j'} (\partial_{j'} \phi^j) (\partial_l \phi^i) (\nabla^i \nabla^j u(\y)) \big) g^{k'l} (\y) n^{k'}_\M(\y) \overset{=}{R}_\delta(\x,\y) d S_{\bm{\alpha}} \\ & +r_{40} .
\end{split}
\end{equation}
By observation, we can mimic the control \eqref{d21cont1} \eqref{d21cont2} for $d_{21}$ to obtain the same bound for the first term of \eqref{r4cont}. Such bound indicates the first term of \eqref{r4cont} to be in $I_1$. 
Similarly, we mimic the control \eqref{d0221} \eqref{d00221} and \eqref{d000221} for $d_{221}$ to carry out the same bound for the second term of \eqref{r4cont}. Consequently, the second term of \eqref{r4cont} is in $I_2$.
while the third term
\begin{equation}
r_{40}=  \int_\M \frac{1}{\sqrt{det \ G(\y) }} \partial_{i'} \big( \sqrt{det \ G(\y) } g^{i'j'} (\partial_{j'} \phi^j) (\partial_l \phi^i) (\nabla^i \nabla^j u(\y)) \big) d_{121}^l  d\mu_\y 
\end{equation}
is obviously a lower order term and belongs to $I_1$. Here $d_{121} $ is the error function defined in \eqref{dd121} in the appendix.

\end{enumerate}
So far, we have classified all the terms that composes $r_{in}$ as in set $I_1$ and set $I_2$. Finally, what left in the theorem is the control of $r_{bd}$. Recall the definition of $r_{bd}$ in \eqref{b002} and refer to \eqref{du} \eqref{tilder} \eqref{tilderq}, we write
\begin{equation}
\begin{split}
r_{bd}(\x)=  2 \int_{\M} u (\y)   \ \bar{R}_{\delta} (\x, \y) d \mu_\y  +   4 \delta^2 \frac{\partial u}{\partial \n} (\x)  \int_{\partial \M}  \overset{=}{R}_{\delta} (\x, \y) d \tau_\y + 2\delta^2 \int_{\M} f(\y) \ \overset{=}{R}_{\delta} (\x, \y) d \mu_\y \\
  - \kappa_{\n} (\x) \big(  \int_{\M} u(\y) \ (\x-\y) \cdot  \n(\x)   \ \bar{R}_{\delta} (\x, \y) d \mu_\y
+   \frac{\partial u}{\partial \n} (\x)  \int_{\M}  \ ((\x-\y) \cdot n(\x) )^2 \ \bar{R}_{\delta} (\x,\y) d \mu_\y \big).
\end{split}
\end{equation}
By referring to \eqref{app1} and \eqref{app2} and the fact $u \equiv 0$ on $\partial \M$, we then split $r_{bd}$ into $r_{bd}=2 r_6+2r_7+2\delta^2 \bar{r}_{in}-\kappa_{\n} r_8$, where
\begin{equation}
r_{6}=\delta^2 \int_{\partial \M} ((\x-\y) \cdot  \n(\y)) ( f(\y)-\kappa_{\n} (\y) \frac{\partial u}{\partial \n}(\y) ) \overset{=}{R}_{\delta} (\x, \y) d \tau_\y,
\end{equation}
\begin{equation} \label{r7}
r_7=2\delta^2 ( \int_{\partial \M} \frac{\partial u}{\partial \n} (\y) \overset{=}{R}_{\delta} (\x, \y) d \tau_\y
-  \int_{\partial \M} \frac{\partial u}{\partial \n} (\x) \overset{=}{R}_{\delta} (\x, \y) d \tau_\y ),
\end{equation}
\begin{equation} \label{r8}
r_8=\int_{\M} \big( u (\y)-u(\x)+  (\x-\y) \cdot \nabla u(\x) \big)  \ ((\x-\y) \cdot \n(\x) )  \ \bar{R}_{\delta} (\x, \y) d \mu_\y ,
 \end{equation}
 \begin{equation} \label{4291}
\begin{split}
 \!\!\!\!\! \bar{r}_{in} = & \int_{\M} f(\y) \ \overset{=}{R}_{\delta} (\x, \y) d \mu_\y - \frac{1}{ \delta^2} \int_{\M} (u(\x)-u(\y)) \ \bar{R}_{\delta} (\x, \y) d \mu_\y 
 + 2 \int_{\partial \M} \frac{\partial u}{\partial \n} (\y) \overset{=}{R}_{\delta} (\x, \y) d \mu_\y \\
& + \int_{\partial \M} ((\x-\y) \cdot  \n(\y)) (- f(\y)+\kappa_{\n} (\y) \frac{\partial u}{\partial \n}(\y) ) \overset{=}{R}_{\delta} (\x, \y) d \mu_\y. 
\end{split}
\end{equation}

 Our next goal is to control each term of $r_{bd}$. 
 \begin{enumerate}
 \item
 We start with the control of $r_6$. In fact, due to the smoothness of $\partial \M$, for $\x, \y \in \partial \M$ with $|\x-\y| < 2 \delta$, the term $(\x-\y) \cdot \n(\x)$ is $\mathcal{O}(\delta^2)$. We then have the following bound for $r_6$:
\begin{equation}
\begin{split}
& \!\!\!\!\!\!\!\! \!\! \left \lVert r_6 \right \rVert^2_{L^2(\partial \M)}= \int_{\partial \M} (  \delta^2 \int_{\partial \M} ((\x-\y) \cdot  \n(\y))  \ (f(\y)-\kappa_{\n} (\y) \frac{\partial u}{\partial \n}(\y) ) \   \overset{=}{R}_{\delta} (\x, \y) d \tau_\y)^2 d \tau_\x \\
& \leq C \delta^4 \int_{\partial \M} ( \int_{\partial \M} \delta^2 | \nabla_{\M}^2 u (\y) |  \ \overset{=}{R}_{\delta} (\x, \y) d \tau_\y)^2 d \tau_\x \\
& \leq C \delta^8 \int_{\partial \M} ( \int_{\partial \M} | \nabla_{\M}^2 u (\y) |^2 \ \overset{=}{R}_{\delta} (\x, \y) d \tau_{\y}) \ ( \int_{\partial \M} \ \overset{=}{R}_{\delta} (\x, \y) d \tau_\y) \  d \tau_\x \\
& \leq C \delta^6 \int_{\partial \M} | \nabla_{\M}^2 u (\y) |^2 d \tau_\y=C \delta^6 \left \lVert u \right \rVert^2_{H^2(\partial \M)} \leq C \delta^6 \left \lVert u \right \rVert^2_{H^3(\M)}.
\end{split}
\end{equation}
\item
For the next term $r_7$, we denote
% the trick we need here is to apply the approximation \eqref{rinin} on $\partial \M$ instead of on $\M$. Since $\partial \M$ is an $(m-1)$ dimensional manifold without boundary, we apply \eqref{rinin} to discover 
\begin{equation} \label{r70}
r_{70}=\frac{1}{\delta^2} \int_{\partial \M} (\frac{\partial u}{\partial \n} (\y) -\frac{\partial u}{\partial \n} (\x) ) \overset{=}{R}_{\delta} (\x, \y) d \tau_\y - \int_{\partial \M} \Delta_{\partial \M} ( \frac{\partial u}{\partial \n} (\y) ) {\overset{\equiv}{R}}_{\delta}(\x,\y) d \tau_{\y}.
\end{equation} 
 Here the kernel function $\overset{\equiv}{R}_{\delta}(\x, \y) =C_{\delta} \overset{\equiv}{R} \big( \frac{| \x-\y| ^2}{4 \delta^2} \big)$, and $\overset{\equiv}{R}(r)=\int_r^{+\infty} \overset{=}{R}(s) ds$. We compare  \eqref{r70} with the approximation \eqref{b0016}. Due to the fact that $\partial \M$ is an $(m-1)$ dimensional manifold with empty boundary, \eqref{r70} is exactly \eqref{b0016} after replacing $u$ by $\frac{\partial u}{\partial \n} $, $R$ by $\overset{=}{R}$, and $\M$ by $\partial \M$. Consequently, $r_{70}$ is a lower order term compared to the other terms in \eqref{r70}. We then have
\begin{equation}
\begin{split}
\left \lVert r_7 \right \rVert^2_{L^2(\partial \M)} \leq & 4 \delta^8 \int_{\partial \M} ( \int_{\partial \M}  \Delta_{\partial \M} \frac{\partial u}{\partial \n} (\y) {\overset{\equiv}{R}}_{\delta}(\x,\y)  d \tau_{\y} )^2 d\tau_{\x} + 4 \delta^8 \left \lVert r_{70} \right \rVert_{L^2(\partial \M)} \\
\leq  & C \delta^8 \int_{\partial \M} ( \int_{\partial \M}  |\nabla_{\M}^3 u (\y)| {\overset{\equiv}{R}}_{\delta}(\x,\y)  d \tau_{\y} )^2 d\tau_{\x}  \\
 \leq & C \delta^8 \int_{\partial \M} ( \int_{\partial \M} | \nabla_{\M}^3 u (\y) |^2 \ \overset{=}{R}_{\delta} (\x, \y) d \tau_{\y}) \ ( \int_{\partial \M} \ \overset{=}{R}_{\delta} (\x, \y) d \tau_\y) \  d \tau_\x \\
 \leq & C \delta^6 \int_{\partial \M} | \nabla_{\M}^3 u (\y) |^2 d \tau_\y=C \delta^6 \left \lVert u \right \rVert^2_{H^3(\partial \M)} \leq C \delta^6 \left \lVert u \right \rVert^2_{H^4(\M)}.
\end{split}
\end{equation}
\item Next, for the term $r_8$, we have
 \begin{equation} \label{RR8}
 \begin{split}
| r_8|  \leq & C \delta \int_{\M} \Big| u (\y)-u(\x) -  (\y-\x) \cdot \nabla u(\x) \Big| \bar{R}_{\delta} (\x, \y) d \mu_\y \\
\leq & C \delta \int_{\M} \Big| u (\y)-u(\x) -  (\y-\x) \cdot \nabla u(\x) - \frac{1}{2} \eta^i \eta^j \nabla^i \nabla^j u(\x) \Big| \bar{R}_{\delta} (\x, \y) d \mu_\y \\
& + C \delta \int_{\M} \big| \eta^i \eta^j \nabla^i \nabla^j u(\x) \big| \bar{R}_{\delta} (\x, \y) d \mu_\y \\
\leq & C \delta^3 \frac{1}{\delta^2} \int_{\M} \Big| u (\y)-u(\x) -  (\y-\x) \cdot \nabla u(\x) - \frac{1}{2} \eta^i \eta^j \nabla^i \nabla^j u(\x) \Big| \bar{R}_{\delta} (\x, \y) d \mu_\y \\
& + C \delta^3  | \nabla^2  u(\x) | .
\end{split}
 \end{equation}
 For the second term of \eqref{RR8},  it is clear that its boundary $L^2$ norm is bounded by $\delta^3  \left \lVert u \right \rVert^2_{H^4(\M)} $. What left is the control of the first term of \eqref{RR8}. We denote it as $\bar{r}_{1}$. Recall in \eqref{rrrrr1} we have controlled the term
 \begin{equation} 
r_1(\x)=\frac{1}{\delta^2} \int_\M (u (\y)-u(\x) -  (\y-\x) \cdot \nabla u(\x) - \frac{1}{2} \eta^i \eta^j \nabla^i \nabla^j u(\x)) R_\delta(\x,\y) d \mu_{\y} 
\end{equation}
with the bound
 \begin{equation}
  \left \lVert r_1 \right \rVert_{H^1(\M)}  \leq C \delta^{-\frac{1}{2}}  \left \lVert u \right \rVert_{H^4(\M)}.
 \end{equation}
 We compare $r_1$ with $\bar{r}_1$ to notice that only the kernel function has been changed from ${R}$ into $\bar{R}$, hence the same argument hold for $\bar{r}_{1}$:
  \begin{equation}
 \begin{split}
  \left \lVert \bar{r}_{1} \right \rVert_{H^1( \M)}  \leq C \delta^3 \ \delta^{-\frac{1}{2}}  \left \lVert u \right \rVert^2_{H^4(\M)}= C \delta^{\frac{5}{2}}  \left \lVert u \right \rVert^2_{H^4(\M)} ,
\end{split}
 \end{equation}
 which gives
 \begin{equation}
 \begin{split}
 \left \lVert \bar{r}_{1} \right \rVert_{L^2(\partial \M)} \leq C   \left \lVert \bar{r}_{1} \right \rVert_{H^1( \M)} \leq 
C \delta^{\frac{5}{2}}  \left \lVert u \right \rVert^2_{H^4(\M)},
\end{split}
 \end{equation}
 therefore
\begin{equation}
 \left \lVert r_8  \right \rVert_{L^2(\partial \M)} \leq C  \delta^{\frac{5}{2}}  \left \lVert u \right \rVert^2_{H^4(\M)}.
\end{equation}
\item
Finally, what left is the term $\bar{r}_{in}$. By a simple observation, we see $\bar{r}_{in}$ is exactly the term $r_{in}$ after replacing the kernel functions $R_{\delta}$ in \eqref{b001} by $\bar{R}_{\delta}$ in \eqref{4291}.
 recall our estimate on ${r}_{in}$ in the first part of theorem:
\begin{equation}
\begin{split}
\left \rVert {r}_{in} \right \rVert_{L^2(\partial \M)} \leq  &  \left \lVert r_{it}  \right \rVert_{L^2( \partial \M)} + \left \lVert r_{bl} \right \rVert_{L^2( \partial \M)}  \\
\leq & \ C ( \left \lVert r_{it}  \right \rVert_{H^1(  \M)} + \left \lVert r_{bl} \right \rVert_{H^1( \M)})
\leq C \delta^{\frac{1}{2}}  \left \lVert u  \right \rVert_{H^4(\M)},
\end{split}
\end{equation}

our previous calculation indicates that the same bound holds for $\bar{r}_{in}$: 
\[
\left \rVert \bar{r}_{in} \right \rVert_{L^2(\partial \M)} \leq C \delta^{\frac{1}{2}}  \left \lVert u  \right \rVert_{H^4(\M)}.
\]
\end{enumerate}
Finally, we sum up all the above 4 estimates to conclude
\begin{equation}
\begin{split}
\left \lVert r_{bd} \right \rVert_{L^2(\partial \M)} \leq & \left \lVert r_6 \right \rVert_{L^2(\partial \M)} + \left \lVert r_7 \right \rVert_{L^2(\partial \M)} +  \left \lVert p \ r_8 \right \rVert_{L^2(\partial \M)} + 2 \delta^2 \left \lVert \bar{r}_{in} \right \rVert_{L^2(\partial \M)} \\ 
\leq & C \delta^{\frac{5}{2}}  \left \lVert u \right \rVert_{H^4(\M)} .
\end{split}
\end{equation}

Hence we have completed the proof of theorem \ref{Nonlocal_Model}.

\end{proof}

\section{Appendix}

\subsection{Proof of Lemma \ref{equality}} \label{bs0lemma}

We start with the proof of the equality
\begin{equation} \label{mm09}
 \frac{\partial^2 u}{\partial \n^2}(\x) =  \Delta_\M u(\x) + \sum \limits_{i=1}^{m-1}  \sum \limits_{j=1}^{m-1}    h^{ij}(\omega) l_{ij}(\omega)  \ \frac{\partial u}{\partial \n} (\x).
\end{equation}

Let $\mathcal{T}_{\x}(\partial \M)$ be the tangent space of $\partial \M$ at $\x$. Apparently, we have $ \mathcal{T}_{\x}(\partial \M) \oplus \n(\x)=\mathcal{T}_{\x}( \M)$.  Let $ \{ \bm{\tau}_i \}_{i=1,2,...,m-1} $ be an orthonormal basis of $\mathcal{T}_{\x}( \partial \M)$ and 
\begin{equation}
\tau_{ij}= \sum \limits_{k=1}^{m-1} h^{jk} ( \frac{\partial \bm{\psi}}{\partial \omega_k} , \bm{\tau}_i ),
\end{equation}
then by calculation, we can decompose $\bm{\tau}_i$ into
\begin{equation} \label{mm11}
\bm{\tau}_i= \sum \limits_{j=1}^{m-1} \tau_{ij} \frac{\partial \bm{\psi}}{\partial \omega_j}.
\end{equation}
Now for each $i$, we can find a curve $\bm{\gamma}_i \subset \partial \M$ with equation
\begin{equation}
\bm{\gamma}_i(t)= \bm{\psi} (\omega_1+ \tau_{i1}t, \omega_2+\tau_{i2}t,..., \omega_{m-1}+\tau_{i \ m-1}t )
\end{equation}
 such that $\bm{\gamma}_i(0)=\x$ and
\begin{equation}
\frac{d \bm{\gamma_i} }{dt}(0)=\sum \limits_{j=1}^{m-1} \tau_{ij} \frac{\partial \bm{\psi}}{\partial \omega_j}=\bm{\tau}_i. 
\end{equation} 

Since $u \equiv 0$ on $\bm{\gamma}_i$ according to the Dirichlet condition, we have
\begin{equation} \label{m08}
\begin{split}
0= & \frac{ d }{dt  } u(\bm{\gamma}_i(0)) = \nabla u(\x)  \cdot  \frac{d}{dt} \bm{\gamma}_i (0)= \nabla u (\x)  \cdot \bm{\tau}_i (\x) ,
\end{split}
\end{equation}
and
\begin{equation} \label{m07}
\begin{split}
0= & \frac{ d^2 }{dt^2  } u (\bm{\gamma}_i(0)) = \nabla u(\x)  \cdot \frac{d^2}{dt^2} \bm{\gamma}_i(0) 
+ \frac{d }{dt} (\nabla u(\x)) \cdot \frac{d}{dt} \bm{\gamma}_i(0) \\
= & \nabla u (\x) \cdot \frac{d^2}{dt^2} \bm{\gamma}_i(0) + \bm{H}_u(\x): \bm{\tau_i} \otimes \bm{\tau_i} \\
= & \nabla u (\x) \cdot  \sum \limits_{j=1}^{m-1} \sum \limits_{k=1}^{m-1} ( \frac{\partial^2 \bm{\psi}} {\partial \omega_j \partial \omega_k} \tau_{ij} \tau_{ik} )+ \bm{H}_u(\x): \bm{\tau_i} \otimes \bm{\tau_i}.
\end{split}
\end{equation}
By the definition of $\nabla$ in \eqref{nablaF}, since $\mathcal{T}_{\x}\M$ has orthonormal basis $\{ \n, \bm{\tau}_1,..., \bm{\tau}_{m-1} \} $, and $\partial_i \bm{\phi}  \in \mathcal{T}_{\x}(\M)$ for each $i \in \{1,2,...,m \}$, we have
\begin{equation} \label{mm08}
\begin{split}
\nabla u= & \sum \limits_{i=1}^{m} \sum \limits_{j=1}^{m}  ( \partial_i \bm{\phi} ) g^{ij} (\partial_j u) =
 \sum \limits_{k=1}^{m-1} \sum \limits_{i=1}^{m} \sum \limits_{j=1}^{m}   ( \partial_i \bm{\phi} \cdot  \bm{\tau}_k ) \bm{\tau}_k g^{ij} (\partial_j u)  \\ 
 & +
\sum \limits_{i=1}^{m} \sum \limits_{j=1}^{m}    ( \partial_i \bm{\phi} \cdot  \n ) \  \n \ g^{ij} (\partial_j u) \\
= & \sum \limits_{k=1}^{m-1} ( \nabla u \cdot \bm{\tau}_k) \bm{\tau}_k + (\nabla u \cdot \n ) \n
= (\nabla u \cdot \n ) \n= \frac{\partial u}{\partial \n} \n,
\end{split}
\end{equation}
this implies
\begin{equation}
\nabla u (\x) \cdot  \frac{\partial^2 \bm{\psi}} {\partial \omega_j \partial \omega_k} \tau_{ij} \tau_{ik} = ( \frac{\partial^2 \bm{\psi}} {\partial \omega_j \partial \omega_k}, \n) \tau_{ij} \tau_{ik}   \frac{\partial u}{\partial \n} 
=l_{jk} \tau_{ij} \tau_{ik} \frac{\partial u }{\partial \n}.
\end{equation}
Using such fact, we then sum up \eqref{m07} from $i=1$ to $m-1$ to discover
\begin{equation} \label{mm07}
0=  \sum \limits_{i=1}^{m-1}   \sum \limits_{j=1}^{m-1} \sum \limits_{k=1}^{m-1} l_{jk} \tau_{ij} \tau_{ik} \frac{\partial u }{\partial \n} +   \sum \limits_{i=1}^{m-1} \bm{H}_u(\x): \bm{\tau_i} \otimes \bm{\tau_i}.
\end{equation}
For the first term of \eqref{mm07}, we denote ${\Gamma}=(\tau_{ij})_{i,j=1,2,...,m-1}$. Since $\{ \bm{\tau}_i \}_{i=1,2,...,m-1} $ is orthonormal, the expansion \eqref{mm11} gives
\begin{equation}
{\Gamma}^T H {\Gamma}=I ,
\end{equation}
hence $ \Gamma \Gamma^T=H^{-1}  $. We then return to \eqref{mm07} to obtain
\begin{equation} \label{mm06}
\begin{split}
 \sum \limits_{i=1}^{m-1}   \sum \limits_{j=1}^{m-1} \sum \limits_{k=1}^{m-1} l_{jk} \tau_{ij} \tau_{ik}
 =  tr( \ {\Gamma}^T  L  {\Gamma}) =tr(   L \ {\Gamma}  {\Gamma}^T )  \\
 =  \sum \limits_{i=1}^{m-1} \sum \limits_{j=1}^{m-1} l_{ij} h^{ji}
 =   \sum \limits_{i=1}^{m-1} \sum \limits_{j=1}^{m-1} l_{ij} h^{ij}.
 \end{split}
 \end{equation}

For the last term of \eqref{mm07}, as the equation (30) of \cite{Base1} gives us
\begin{equation}
\Delta_\M u=\nabla^j (\nabla^j u) =(\partial_{k'} \phi^j) g^{k'l'} \partial_{l'} \big( ( \partial_{m'} \phi^j) g^{m' n'} (\partial_{n'} u) \big),
\end{equation}
and by the same argument as \eqref{mm08}, 
\begin{equation}
\partial_{k'} \phi^j=(\partial_{k'} \bm{\phi})^j=\sum \limits_{i=1}^{m-1} (\partial_{k'} \bm{\phi} \cdot \bm{\tau}_i ) \tau_i^j +  (\partial_{k'} \bm{\phi} \cdot \n ) n^j 
=\sum \limits_{i=1}^{m-1} \partial_{k'} {\phi^l}  {\tau}_i^l  \tau_i^j +  \partial_{k'} {\phi^l} n^l n^j ,
\end{equation}
this implies
\begin{equation} \label{mm10}
\begin{split}
& \Delta_\M u(\x)=  (\partial_{k'} \phi^j) g^{k'l'} \partial_{l'} \big( ( \partial_{m'} \phi^j) g^{m' n'} (\partial_{n'} u) \big)  \\
& =  \sum \limits_{i=1}^{m-1}  {\tau}_i^l  \tau_i^j   \partial_{k'} {\phi^l}   g^{k'l'} \partial_{l'} \big( ( \partial_{m'} \phi^j) g^{m' n'} (\partial_{n'} u) \big) + n^l n^j  \partial_{k'} {\phi^l}  g^{k'l'} \partial_{l'} \big( ( \partial_{m'} \phi^j) g^{m' n'} (\partial_{n'} u) \big) \\
& =  \sum \limits_{i=1}^{m-1}  {\tau}_i^l \tau_i^j  \nabla^l  \nabla^j u+ n^l n^j  \nabla^l  \nabla^j u
= \sum \limits_{i=1}^{m-1} \bm{H}_u(\x): \bm{\tau_i} \otimes \bm{\tau_i} + \frac{\partial^2 u}{\partial \n^2}(\x).
\end{split}
\end{equation}
We then apply \eqref{mm06} and \eqref{mm10} into \eqref{mm07} to discover
\begin{equation}
 \frac{\partial^2 u}{\partial \n^2}(\x)=  \Delta_\M u(\x)+ \sum \limits_{i=1}^{m-1} \sum \limits_{j=1}^{m-1} l_{ij} h^{ij} \frac{\partial u }{\partial \n}(\x) .
\end{equation}
Now we have finished the proof of \eqref{mm09}. What remains to show is the equality \eqref{kappan1}. Let $\mathcal{T}_{\x}: \partial \M \to \mathcal{P}_{\x}(\partial \M)$ be the map of projection from $\partial \M$ into the Euclid space $\mathcal{T}_{\x}(\M)$. 
Apparently,  $\bm{\psi}_P=\mathcal{T}_{\x} \circ \bm{\psi} : \Sigma  \subset \mathbb{R}^{m-1}  \to \mathcal{P}_{\x}( \partial \M ) \subset \mathbb{R}^d$ is a local parametrization of $\mathcal{P}_{\x}( \partial \M )$ near $\x$, with
\begin{equation} \label{mm12}
\frac{\partial \bm{\psi}_P}{\partial \omega_j}=\frac{\partial \bm{\psi}}{\partial \omega_j}, \ j=1,2,...,m-1.
\end{equation}
\begin{equation} \label{mm13}
\frac{\partial^2 \bm{\psi}_P}{\partial \omega_j \partial \omega_k}= \sum \limits_{i=1}^{m-1} ( \frac{\partial^2 \bm{\psi}}{\partial \omega_j \partial \omega_k} \cdot \bm{\tau}_i ) \bm{\tau}_i+ 
(\frac{\partial^2 \bm{\psi}}{\partial \omega_j \partial \omega_k } \cdot \n ) \n,
\ j,k=1,2,...,m-1.
\end{equation}

We denote $\kappa_i, i \in \{1,...,m-1 \}$ as the normal curvature of $\mathcal{P}_{\x}(\partial \M)$ at $\x$ in the direction of $\bm{\tau}_i$. According to \eqref{mm11}, $\bm{\tau}_i$ has the following decomposition
\begin{equation} 
\bm{\tau}_i= \sum \limits_{j=1}^{m-1} \tau_{ij} \frac{\partial \bm{\psi}}{\partial \omega_j}= 
 \sum \limits_{j=1}^{m-1} \tau_{ij} \frac{\partial \bm{\psi}_P}{\partial \omega_j},
\end{equation}
thereafter, we apply the normal curvature function of $\mathcal{P}_{\x}(\partial \M)$ in the direction of $\bm{\tau}_i$ to discover
\begin{equation} 
\begin{split}
\kappa_i=  II(\bm{\tau}_i, \bm{\tau}_i) /  I(\bm{\tau}_i, \bm{\tau}_i)=
\frac{  \sum \limits_{j=1}^{m-1} \sum \limits_{k=1}^{m-1} \tau_{ij}   \tau_{ik} \frac{\partial^2 \bm{\psi}_P}{\partial \omega_j \partial \omega_k}  \cdot \n   } {  \sum \limits_{j=1}^{m-1} \sum \limits_{k=1}^{m-1}  \tau_{ij}   \tau_{ik} \frac{\partial \bm{\psi}_P}{\partial \omega_j} \cdot \frac{\partial \bm{\psi}_P}{\partial \omega_k}    } ,
\end{split}
\end{equation}
by \eqref{mm12}, \eqref{mm13} and $\left \lVert \bm{\tau}_i \right \rVert=1$, we can obtain
\begin{equation}
\begin{split}
\kappa_i=
\frac{  \sum \limits_{j=1}^{m-1} \sum \limits_{k=1}^{m-1} \tau_{ij}   \tau_{ik} \frac{\partial^2 \bm{\psi}}{\partial \omega_j \partial \omega_k}  \cdot \n   } {  \sum \limits_{j=1}^{m-1} \sum \limits_{k=1}^{m-1}  \tau_{ij}   \tau_{ik} \frac{\partial \bm{\psi}}{\partial \omega_j} \cdot \frac{\partial \bm{\psi}}{\partial \omega_k}    } 
=  \sum \limits_{j=1}^{m-1} \sum \limits_{k=1}^{m-1} \tau_{ij}   \tau_{ik} l_{jk}.
\end{split}
\end{equation}
By the definition of $\kappa_{\n}$, 
\begin{equation}
 \kappa_{n}=(m-1) \frac{ \sum \limits_{i=1}^{m-1} \kappa_i}{m-1}= \sum \limits_{i=1}^{m-1} \kappa_i=
\sum \limits_{i=1}^{m-1} \sum \limits_{j=1}^{m-1} \sum \limits_{k=1}^{m-1} \tau_{ij}   \tau_{ik} l_{jk}
= \sum \limits_{i=1}^{m-1}  \sum \limits_{j=1}^{m-1}   h^{ij} l_{ij},
\end{equation}
where the last equality comes from \eqref{mm06}. Hence we have completed the proof.

\subsection{Proof of Lemma \ref{baselemma}} \label{bslemma}

By the definition of $\bm{\eta}$, we can do the following expansion:
\begin{equation} \label{lm461}
\begin{split}
&  \int_{\partial \M}  \int_\M  f_1(\x) \g_1(\y) \cdot  (\x-\y-\bm{\eta}(\x,\y)) R_\delta(\x,\y) d \mu_\x d \tau_\y \\
= & \int_{\partial \M}  \int_\M  f_1(\x) g_1^j(\y)  \xi^{i'} \xi^{j'}  \int_0^1 \int_0^1  s \partial_{j'} \partial_{i'} \bm{\phi}^j (\bm{\alpha}+ \tau s \bm{\xi} ) d\tau ds \ R_\delta(\x,\y) d \mu_\x d \tau_\y \\
  = & \int_{\partial \M}   \int_\M  f_1(\x) g^j_1(\y)  \xi^{i'} \xi^{j'} \xi^{k'} (\int_0^1 \int_0^1 \int_0^1 s^2 \tau \partial_{j'} \partial_{i'} \partial_{k'} \phi^j (\bm{\alpha}+\tau s t \bm{\xi}) dt d\tau ds) {R}_\delta(\x,\y) \\ & d \mu_\x d \tau_\y 
  + \int_{\partial \M}  \int_\M  f_1(\x) g^j_1(\y)  \frac{1}{2} \xi^{i'} \xi^{j'} \partial_{j'} \partial_{i'} \phi^j (\bm{\alpha})  {R}_\delta(\x,\y) d \mu_\x d \tau_\y \\
 = &  \int_{\partial \M} g^j_1(\y) \int_\M  f_1(\x)   \frac{1}{2} \xi^{i'} \xi^{j'} \partial_{j'} \partial_{i'} \phi^j (\bm{\alpha})  {R}_\delta(\x,\y) d \mu_\x d \tau_\y  +r_{51} ,
\end{split}
\end{equation}
where $r_{51}$ is a lower order term and will be controlled later. Next, we will use the fact that
\begin{equation} \label{m06}
\begin{split}
2\delta^2 \partial_l^{\bm{\beta}} \bar{R}_\delta(\x,\y)= & 2\delta^2 \partial_l \bm{\phi} (\bm{\beta}) \cdot \nabla_{\M}^{\x} \bar{R}_\delta(\x,\y) 
 =  \partial_l \bm{\phi} (\bm{\beta}) \cdot \bm{\eta}(\y,\x) {R}_\delta(\y,\x) +r_{52} \\
 = & \partial_l {\phi}^{k'} (\bm{\beta})  {\xi}^{i'}(\y,\x) \partial_{i'} {\phi}^{k'} (\bm{\beta})  {R}_\delta(\x,\y) +r_{52}  \\
 = & - \partial_l {\phi}^{k'} (\bm{\beta}) \partial_{i'} {\phi}^{k'} (\bm{\beta})   {\xi}^{i'}(\x,\y) {R}_\delta(\x,\y) +r_{52}  \\
 = & - g_{i'l}(\x)  {\xi}^{i'} {R}_\delta(\x,\y) +r_{52} ,
 \end{split}
\end{equation}
where
\begin{equation} \label{lm464}
\begin{split}
 & r^j_{52}(\x,\y)=  \partial_j \bm{\phi} (\bm{\beta}) \cdot (2\delta^2 \nabla_{\M}^{\x} \bar{R}_\delta(\x,\y) -
\bm{\eta}(\y,\x) {R}_\delta(\y,\x) ) \\
= & \partial_j {\phi}^l (\bm{\beta}) ( \partial_{m'} \phi^l (\bm{\alpha}) g^{m'n'} \partial_{n'} \phi^i (\bm{\alpha}) (x^i-y^i) \bar{R}_{\delta} (\x,\y) \\ & - \partial_{m'} \phi^l (\bm{\alpha}) g^{m'n'} \partial_{n'} \phi^i (\bm{\alpha}) \xi^{i'} \partial_{i'} \phi^i \bar{R}_{\delta} (\x,\y) ) \\
= &   \partial_j {\phi}^l (\bm{\beta})  \partial_{m'} \phi^l (\bm{\alpha}) g^{m'n'} \partial_{n'} \phi^i (\bm{\alpha}) (x^i-y^i - \xi^{i'} \partial_{i'} \phi^i ) \bar{R}_{\delta} (\x,\y) \\
= & \partial_j {\phi}^l (\bm{\beta})  \partial_{m'} \phi^l (\bm{\alpha}) g^{m'n'} \partial_{n'} \phi^i (\bm{\alpha})  \bar{R}_{\delta} (\x,\y)     \xi^{i'} \xi^{j'}  \int_0^1 \int_0^1  s \partial_{j'} \partial_{i'} \bm{\phi}^i (\bm{\alpha}+ \tau s \bm{\xi} ) d\tau ds, 
\end{split}
\end{equation}
which is obviously a lower order term  comparing to the other terms in \eqref{m06}. Next, we apply \eqref{m06} to \eqref{lm461} to obtain

\begin{equation} \label{lm462}
\begin{split}
 \int_\M & f_1(\x) \xi^{i'} \xi^{j'} \partial_{j'} \partial_{i'} \phi^j (\bm{\alpha})  {R}_\delta(\x,\y) d \mu_\x \\
 = &   \int_{{\Omega}_{J(\y)}}  f_1(\x) \xi^{i'} \xi^{j'} \partial_{j'} \partial_{i'} \phi^j (\bm{\alpha})  {R}_\delta(\x,\y) \sqrt{det \ G(\bm{\beta})} \ d \bm{\beta}   \\ 
 = &- 2\delta^2 \int_{{\Omega}_{J(\y)}}  f_1(\x)     \xi^{j'} \partial_{j'} \partial_{i'} \phi^j (\bm{\alpha}) g^{i'l}(\x)  \partial_{l}^{\bm{\beta}} \bar{R}_\delta(\x,\y) \ \sqrt{det \ G(\bm{\beta})} \ d \bm{\beta}+ r_{53} \\
 = & -2\delta^2 \int_{\partial {\Omega}_{J(\y)}}  f_1(\x)     \xi^{j'} \partial_{j'} \partial_{i'} \phi^j (\bm{\alpha}) g^{i'l}(\x)  n_{\Omega}^l(\bm{\beta})  \bar{R}_\delta(\x,\y) \ \sqrt{det \ G(\bm{\beta})}  \ d S_{\bm{\beta}} \\
& +2\delta^2 \int_{{\Omega}_{J(\y)}}  \partial_l \big( f_1(\x)     \xi^{j'} \partial_{j'} \partial_{i'} \phi^j (\bm{\alpha}) g^{i'l}(\x) \ \sqrt{det \ G(\bm{\beta})} \big)  \bar{R}_\delta(\x,\y)   \ d \bm{\beta} + r_{53} \\
 = & -2\delta^2 \int_{\partial {{\Omega}_{J(\y)}}}  f_1(\x)     \xi^{j'} \partial_{j'} \partial_{i'} \phi^j (\bm{\alpha}) g^{i'l}(\x)  n_{\Omega}^l(\bm{\beta})  \bar{R}_\delta(\x,\y) \ \sqrt{det \ G(\bm{\beta})}  \ d S_{\bm{\beta}} \\
& +2\delta^2 \int_{ {{\Omega}_{J(\y)}}}  \partial_l \big( f_1(\x)  \partial_{j'} \partial_{i'} \phi^j (\bm{\alpha}) g^{i'l}(\x) \ \sqrt{det \ G(\bm{\beta})} \big)  \xi^{j'}  \bar{R}_\delta(\x,\y)   \ d \bm{\beta} \\
& - 2\delta^2 \int_{ {{\Omega}_{J(\y)}}}  f_1(\x)    \partial_{l} \partial_{i'} \phi^j (\bm{\alpha}) g^{i'l}(\x) \ \sqrt{det \ G(\bm{\beta})}  \bar{R}_\delta(\x,\y)   \ d \bm{\beta} +r_{53} \\
= & -2\delta^2 \int_{\partial {{\Omega}_{J(\y)}}}  f_1(\x)     \xi^{j'} \partial_{j'} \partial_{i'} \phi^j (\bm{\alpha}) g^{i'l}(\x)  n_{\Omega}^l(\bm{\beta})  \bar{R}_\delta(\x,\y) \ \sqrt{det \ G(\bm{\beta})}  \ d S_{\bm{\beta}} \\
& - 2\delta^2 \int_\M  f_1(\x)    \partial_{l} \partial_{i'} \phi^j (\bm{\alpha}) g^{i'l}(\x)   \bar{R}_\delta(\x,\y)    d \mu_\x +r_{53} 
+r_{54}.
% = & -2\delta^2 \int_{\partial \M}  f_1(\x)     \xi^{j'} \partial_{j'} \partial_{i'} \phi^j (\bm{\alpha}) g^{i'l}(\x)  n_{\Omega}^l(\bm{\beta})  \bar{R}_\delta(\x,\y) \ det \ G(\bm{\beta})  \ d A \\
% & +2\delta^2 \int_{ \M}  \partial_l \big( f_1(\x)  \partial_{j'} \partial_{i'} \phi^j (\bm{\alpha}) g^{i'l}(\x) \ det \ G(\bm{\beta}) \big) g^{j'l'} (\x) \partial_{l'}^\x \overset{=}{R}_\delta(\x,\y)   \ d A \\
% & - 2\delta^2 \int_{ \M}  f_1(\x)    \partial_{l} \partial_{i'} \phi^j (\bm{\alpha}) g^{i'l}(\x) \ det \ G(\bm{\beta})  \bar{R}_\delta(\x,\y)   \ d A +r_{53} +r_{55} \\
% = & -2\delta^2 \int_{\partial \M}  f_1(\x)     \xi^{j'} \partial_{j'} \partial_{i'} \phi^j (\bm{\alpha}) g^{i'l}(\x)  n_{\Omega}^l(\bm{\beta})  \bar{R}_\delta(\x,\y) \ det \ G(\bm{\beta})  \ d A \\
% & - 4\delta^4 \int_{ \M} \partial_{l'} \Big( \partial_l \big( f_1(\x)  \partial_{j'} \partial_{i'} \phi^j (\bm{\alpha}) g^{i'l}(\x) \ det \ G(\bm{\beta}) \big) g^{j'l'} (\x) \Big) \overset{=}{R}_\delta(\x,\y)   \ d A \\
% & +  4\delta^4 \int_{ \partial \M} \partial_l \big( f_1(\x)  \partial_{j'} \partial_{i'} \phi^j (\bm{\alpha}) g^{i'l}(\x) \ det \ G(\bm{\beta}) \big) g^{j'l'} (\x)  n_\M^{l'} (\x) \ \overset{=}{R}_\delta(\x,\y)   \ d A \\
% & - 2\delta^2 \int_{ \M}  f_1(\x)    \partial_{l} \partial_{i'} \phi^j (\bm{\alpha}) g^{i'l}(\x) \ det \ G(\bm{\beta})  \bar{R}_\delta(\x,\y)   \ d A +r_{53} +r_{55} 
\end{split}
\end{equation}

Here  $G=(g_{ij})_{i,j=1,2,...,m}$ is the matrix defined in \eqref{ggg} such that $d \mu_{\x}=\sqrt{ det \ G(\bm{\beta})} d \bm{\beta}$ by a calculation on the Jacobian matrix, and $\n_{\Omega}(\bm{\beta})$ is the unit outward normal vector of $\Omega_{J(\y)} $ at $\bm{\beta} \in \partial \Omega_{J(\y)}$. To simplify the notations, we always use $\Omega$ to denote $\Omega_{J(\y)}$ in the following content of this lemma.

For the boundary term of \eqref{lm462},  we aim to show it is a lower order term. First, we decompose it into 
% using integration by parts on $\partial \Omega$. Indeed, $\partial \Omega$ is a closed $(m-1)$ dimensional manifold, and we can define the gradient operator $\nabla_{\partial \Omega}$ on $\partial \Omega$ as well as $\nabla_{\M}$ in \eqref{nablaF}.  First, we decompose it into
\begin{equation}
\begin{split}
& -2\delta^2  \int_{\partial \Omega}   f_1(\x)     \xi^{j'} \partial_{j'} \partial_{i'} \phi^j (\bm{\alpha}) g^{i'l}(\x)  n_{\Omega}^l(\bm{\beta})  \bar{R}_\delta(\x,\y) \ \sqrt{det \ G(\bm{\beta})}  \ d S_{\bm{\beta}} \\
 = & 4\delta^4  \int_{\partial \Omega}   f_1(\x)   \partial_{j'} \partial_{i'} \phi^j (\bm{\alpha}) g^{i'l}(\x)  n_{\Omega}^l(\bm{\beta}) ( g^{j'l'}(\x)   \partial_{l'}^{\bm{\beta}}   \overset{=}{R}_\delta(\x,\y) ) \ \sqrt{det \ G(\bm{\beta})}  \ d S_{\bm{\beta}}   +r_{58}  \\
 = & 4\delta^4 \int_{\partial \Omega}   f_1(\x)   \partial_{j'} \partial_{i'} \phi^j (\bm{\alpha}) g^{i'l}(\x)  n_{\Omega}^l(\bm{\beta})  g^{j'l'}(\x)   (  \partial^{\bm{\beta}}    \overset{=}{R}_\delta(\x,\y) \cdot \n_{\Omega}(\bm{\beta}) )  n_{\Omega}^{l'}(\bm{\beta}) \ \sqrt{det \ G(\bm{\beta})}      \\
  &   +f_1(\x)   \partial_{j'} \partial_{i'} \phi^j (\bm{\alpha}) g^{i'l}(\x)  n_{\Omega}^l(\bm{\beta})  g^{j'l'}(\x)    \mathcal{T}'_{\bm{\beta}}  ( \partial^{\bm{\beta}}    \overset{=}{R}_\delta(\x,\y) ) \ \sqrt{det \ G(\bm{\beta})}  \ d S_{\bm{\beta}}   +r_{58}  \\
   = & r_{56}+r_{57} +r_{58},
\end{split}
\end{equation}
here $\mathcal{T}'_{\bm{\beta}}: \mathbb{R}^m \to \mathbb{R}^m $ is the projection from $\Omega$ onto $\mathcal{T}_{\bm{\beta}} (\partial \Omega)$, while $\mathcal{T}_{\bm{\beta}} (\partial \Omega)$ denotes the tangent space of $\partial \Omega$ at point $\bm{\alpha}$.
 
Let us first study $r_{56}$. Since $\partial \M$ is sufficiently smooth, we have $(\x-\y) \cdot \n(\x)$ is $\mathcal{O}(\delta^2)$ when $\x, \y$ both lie on $\partial \M$ and $|\x-\y| \leq 2 \delta$. Consequently, the term $ \partial^{\bm{\beta}}   \overset{=}{R}_\delta(\x,\y) \cdot \n_{\Omega}(\bm{\beta})$ is of lower order due to the fact that $ \nabla_{\M}^{\x}   \overset{=}{R}_\delta(\x,\y) \cdot \n(\x)$ is a lower order term. This observation indicates $r_{56}$ is $\mathcal{O}(\delta^3)$. 
For the term $r_{57}$, by noticing the fact that $ \mathcal{T}'_{\bm{\beta}}  ( \partial^{\bm{\beta}}   )=\partial_{\partial \Omega}^{\bm{\beta}} $, where $\partial_{\partial \Omega}^{\bm{\beta}}$ represents the gradient operator defined on the manifold $\partial \Omega$ embedded in $\mathbb{R}^m$, we can do the following integration by parts:
\begin{equation}
\begin{split}
r_{57}= & 4\delta^4 \int_{\partial \Omega}   f_1(\x)   \partial_{j'} \partial_{i'} \phi^j (\bm{\alpha}) g^{i'l}(\x)  n_{\Omega}^l(\bm{\beta})  g^{j'l'}(\x)    \partial_{\partial \Omega}^{\bm{\beta}}  \overset{=}{R}_\delta(\x,\y)  \ \sqrt{det \ G(\bm{\beta})}  \ d S_{\bm{\beta}}  \\
= & -4\delta^4  \int_{\partial \Omega}   \partial_{\partial \Omega}^{\bm{\beta}} \Big( f_1(\x)   \partial_{j'} \partial_{i'} \phi^j (\bm{\alpha}) g^{i'l}(\x)  n_{\Omega}^l(\bm{\beta})  g^{j'l'}(\x)    \ \sqrt{det \ G(\bm{\beta})} \Big)  \ \overset{=}{R}_\delta(\x,\y)  \ d S_{\bm{\beta}}  ,
 \end{split}
\end{equation}
which indicates  that $r_{57}$ is $\mathcal{O}(\delta^3)$ as well by continuity.

Now let us return to \eqref{lm462}. For the interior term of \eqref{lm462}, we multiply by $g_1$ and integrate over $\partial \M$ to discover
\begin{equation} \label{lm468}
\begin{split}
 \int_{\partial \M} & g^j_1(\y)  \int_\M  f_1(\x)    \partial_{l} \partial_{i'} \phi^j (\bm{\alpha}) g^{i'l}(\x)   \bar{R}_\delta(\x,\y)    d \mu_\x d\tau_\y \\
 = & \int_{\partial \M} g^j_1(\y) \int_\M  f_1(\x)    \partial_{l} \partial_{i'} \phi^j (\bm{\alpha}) g^{i'l}(\y)   \bar{R}_\delta(\x,\y)    d \mu_\x d\tau_\y+r_{55} \\
 = & \int_{\partial \M} \partial_{l} \partial_{i'} \phi^j (\bm{\alpha}) g^{i'l}(\y)  g^j_1(\y) \big( \int_\M  f_1(\x)      \bar{R}_\delta(\x,\y)    d \mu_\x \big) d\tau_\y+r_{55} \\
 = & \int_{\partial \M} \partial_{l} \partial_{i'} \phi^j (\bm{\alpha}) g^{i'l}(\y)  g^j_1(\y) \bar{f}_1(\y) d\tau_\y+r_{55} \\
 \leq & C \left \lVert g_1 \right \rVert_{L^2(\partial \M)} \left \lVert \bar{f}_1 \right \rVert_{L^2( \partial \M)} +r_{55}  \\
  \leq  & C \left \lVert g_1 \right \rVert_{H^1( \M)} \left \lVert \bar{f}_1 \right \rVert_{H^1(  \M)} +r_{55} 
 \end{split}
\end{equation}

Here the lower order terms
\begin{equation}
\begin{split}
r_{51}=  \int_{\partial \M}  \int_\M  f_1(\x) g^j_1(\y)  \xi^{i'} \xi^{j'} \xi^{k'} (\int_0^1 \int_0^1 \int_0^1 s^2 \tau \partial_{j'} \partial_{i'} \partial_{k'} \phi^j (\bm{\alpha}+\tau s t \bm{\xi}) dt d\tau ds) \\
{R}_\delta(\x,\y) d \mu_\x d \tau_\y ,
\end{split}
\end{equation}

\begin{equation}
\begin{split}
r_{54}=2\delta^2 \int_{ \Omega}  \partial_l \big( f_1(\x)  \partial_{j'} \partial_{i'} \phi^j (\bm{\alpha}) g^{i'l}(\x) \ \sqrt{det \ G(\bm{\beta})} \big)  \xi^{j'}  \bar{R}_\delta(\x,\y)   \ d \bm{\beta},
\end{split}
\end{equation}
is obviously $\mathcal{O}(\delta^3)$, for the other error terms, recall the expression

\begin{equation}
\begin{split}
r_{53}=- 2\delta^2 \int_{\Omega} ( f_1(\x)     \xi^{j'} \partial_{j'} \partial_{i'} \phi^j (\bm{\alpha})) r_{52}^l(\x,\y) \ \sqrt{det \ G(\bm{\beta})} \ d \bm{\beta},
\end{split}
\end{equation}
 \begin{equation}
\begin{split}
 r_{58}=-2\delta^2 \int_{\partial \Omega}   f_1(\x)   \partial_{j'} \partial_{i'} \phi^j (\bm{\alpha}) g^{i'l}(\x)  n_{\Omega}^l(\bm{\beta})  g^{j'l'}(\x)   r_{52}^{l'} (\x,\y) \ \sqrt{det \ G(\bm{\beta})}  \ d S_{\bm{\beta}},
 \end{split}
\end{equation}
which indicates that they are $\mathcal{O}(\delta^3)$ terms due to the estimate \eqref{lm464} on $r_{52}$. For the term $r_{55}$, we have

\begin{equation}
\begin{split}
r_{55}= \int_{\partial \M} g^j_1(\y) \int_\M  f_1(\x)    \partial_{l} \partial_{i'} \phi^j (\bm{\alpha}) (g^{i'l}(\x)-g^{i'l}(\y))   \bar{R}_\delta(\x,\y)    d \mu_\x d\tau_\y,
\end{split}
\end{equation}
which is of lower order by the continuity of matrix $g^{i'l}$.  

Consequently, we combine all the above estimates to conclude
\begin{equation}
\begin{split}
 & \int_{\partial \M}   \int_\M  f_1(\x) g^j_1(\y)  \frac{1}{2} \xi^{i'} \xi^{j'} \partial_{j'} \partial_{i'} \phi^j (\bm{\alpha})  {R}_\delta(\x,\y) d \mu_\x d \tau_\y \\
= & \frac{1}{2} \int_{\partial \M} g^j_1(\y)  \int_\M  f_1(\x)    \xi^{i'} \xi^{j'} \partial_{j'} \partial_{i'} \phi^j (\bm{\alpha})  {R}_\delta(\x,\y) d \mu_\x d \tau_\y \\
= & - \delta^2 \int_{\partial \M} g^j_1(\y)    ( \int_\M  f_1(\x)    \partial_{l} \partial_{i'} \phi^j (\bm{\alpha}) g^{i'l}(\x)   \bar{R}_\delta(\x,\y)    d \mu_\x +r_{53} 
+r_{54}+r_{56}+r_{57} +r_{58} ) d \tau_\y \\
\leq & C \delta^2 \left \lVert g_1 \right \rVert_{H^1( \M)} (\left \lVert \bar{f}_1 \right \rVert_{H^1(  \M)}  
+  \int_{\partial \M} g^j_1(\y)    (r_{53} 
+r_{54}+r_{56}+r_{57} +r_{58} ) d \tau_\y + r_{55}
  \\
\leq & C \delta^2 (\left \lVert g_1 \right \rVert_{H^1( \M)} (\left \lVert \bar{f}_1 \right \rVert_{H^1(  \M)} +  \left \lVert {f}_1 \right \rVert_{H^1(  \M)} ),
\end{split}
\end{equation}
and finally
\begin{equation}
\begin{split}
 \int_{\partial \M}  \int_\M &  f_1(\x) \g_1(\y) \cdot  (\x-\y-\bm{\eta}(\x,\y)) R_\delta(\x,\y) d \mu_\x d \tau_\y  \\
 =  \int_{\partial \M}  & \int_\M  f_1(\x) g^j_1(\y)  \frac{1}{2} \xi^{i'} \xi^{j'} \partial_{j'} \partial_{i'} \phi^j (\bm{\alpha})  {R}_\delta(\x,\y) d \mu_\x d \tau_\y +r_{51}  \\
 \leq  C \delta^2 &  (\left \lVert g_1 \right \rVert_{H^1( \M)} (\left \lVert \bar{f}_1 \right \rVert_{H^1(  \M)} +  \left \lVert {f}_1 \right \rVert_{H^1(  \M)} ).
\end{split}
\end{equation}

\subsection{The Control for $r_1$} \label{bs2lemma}

This subsection is about the estimate on $r_1$ in the proof of theorem \ref{Nonlocal_Model}. We denote
\begin{equation} \label{ddddd1}
d(\x,\y)= u(\x)-  u(\y) - (\x-\y) \cdot \nabla u(\y) - \frac{1}{2} \eta^i \eta^j (\nabla^i \nabla^j u(\y)) ,
\end{equation}
then
\begin{equation} \label{rrr1}
r_1(\x)=\frac{1}{\delta^2} \int_\M d(\x,\y) R_\delta(\x,\y) dy.
\end{equation}

Using Newton-Leibniz formula, we can discover
\begin{equation}
\begin{split}
 d(\x,\y)= & u(\x)-  u(\y) - (\x-\y) \cdot \nabla u(\y) - \frac{1}{2} \eta^i \eta^j (\nabla^i \nabla^j u(\y)) \\
 = & \xi^i \xi^{i'} \int_0^1 \int_0^1 \int_0^1 s_1 \frac{d}{ds_3} \Big( \partial_i {\phi}^j (\alpha + s_3 s_1 \xi) \partial_{i'} \phi^{j'} (\alpha+s_3 s_2 s_1 \xi) 
 \\
 & \nabla^{j'} \nabla^j u(\bm{\phi} (\alpha+s_3 s_2 s_1 \xi)) \Big) ds_3 ds_2 ds_1 \\
 = & \xi^i \xi^{i'} \xi^{i''} \int_0^1 \int_0^1 \int_0^1 s_1^2 s_2 \partial_i \phi^j (\alpha+ s_3 s_1 \xi) \partial_{i''} \partial_{i'} \phi^{j'} (\alpha+ s_3 s_2 s_1 \xi) 
 \\ & \nabla^{j'} \nabla^{j} u(\bm{\phi} (\alpha+s_3 s_2 s_1 \xi)) ds_3 ds_2 ds_1 \\
& +  \xi^i \xi^{i'} \xi^{i''} \int_0^1 \int_0^1 \int_0^1 s_1^2  \partial_{i''}  \partial_i \phi^j (\alpha+ s_3 s_1 \xi) \partial_{i'} \phi^{j'} (\alpha+ s_3 s_2 s_1 \xi) \\
& \nabla^{j'} \nabla^{j} u(\bm{\phi} (\alpha+s_3 s_2 s_1 \xi)) ds_3 ds_2 ds_1 \\
& +  \xi^i \xi^{i'} \xi^{i''} \int_0^1 \int_0^1 \int_0^1 s_1^2 s_2 \partial_i \phi^j (\alpha+ s_3 s_1 \xi)  \partial_{i'} \phi^{j'} (\alpha+ s_3 s_2 s_1 \xi)  \partial_{i''} \\ & \phi^{j''} (\alpha+s_3 s_2 s_1 \xi  ) 
 \nabla^{j''}  \nabla^{j'} \nabla^{j} u(\bm{\phi} (\alpha+s_3 s_2 s_1 \xi)) ds_3 ds_2 ds_1 \\
= & d_1(\x,\y) +d_2 (\x,\y)+d_3(\x,\y) +d_{11}+d_{12}+d_{13},
\end{split} 
\end{equation}
where $\bm{\phi}=\bm{\phi}_{J(\x)}$ for simplicity as we mentioned, and
\begin{equation}
d_1(\x,\y)=\frac{1}{6} \xi^{i} \xi^{i'} \xi^{i''} \partial_i \phi^j (\bm{\alpha}) \partial_{i''} \partial_{i'} \phi^{j'} (\bm{\alpha}) \nabla^{j'} \nabla^{j} u(\bm{\alpha}),
\end{equation}
\begin{equation}
d_2(\x,\y)=\frac{1}{3} \xi^{i} \xi^{i'} \xi^{i''}  \partial_{i''}  \partial_i \phi^j (\bm{\alpha})\partial_{i'} \phi^{j'} (\bm{\alpha}) \nabla^{j'} \nabla^{j} u(\bm{\alpha}),
\end{equation}
 \begin{equation}
d_3(\x,\y)=\frac{1}{6} \eta^i \eta^{i'} \eta^{i''} (\nabla^i \nabla^{i'} \nabla^{i''} u(\y)),
\end{equation}
with the lower order terms
\begin{equation}
\begin{split}
d_{11}= & \xi^{i} \xi^{i'} \xi^{i''} \int_0^1 \int_0^1 \int_0^1 \int_0^1 s_1^2 s_2 \frac{d}{ds_4} (  \partial_i \phi^j (\alpha+s_4 s_3 s_1 \xi) \partial_{i''} \partial_{i'} \phi^{j'} (\alpha+ s_4 s_3 s_2 s_1 \xi) \\
& \nabla^{j'} \nabla^{j} u(\bm{\phi} (\alpha+s_4 s_3 s_2 s_1 \xi)) ds_4 ds_3 ds_2 ds_1,
\end{split}
\end{equation}

\begin{equation}
\begin{split}
d_{12}= & \xi^i \xi^{i'} \xi^{i''} \int_0^1 \int_0^1 \int_0^1 \int_0^1 s_1^2 \frac{d}{ds_4} \partial_{i''}  \partial_i \phi^j (\alpha+ s_4 s_3 s_1 \xi) \partial_{i'} \phi^{j'} (\alpha+ s_4 s_3 s_2 s_1 \xi) \\
& \nabla^{j'} \nabla^{j} u(\bm{\phi} (\alpha+ s_4 s_3 s_2 s_1 \xi)) ds_4 ds_3 ds_2 ds_1,
\end{split}
\end{equation}

\begin{equation}
\begin{split}
d_{13}= & \xi^i \xi^{i'} \xi^{i''} \int_0^1 \int_0^1 \int_0^1 \int_0^1 s_1^2 s_2 \frac{d}{ds_4} \partial_i \phi^j (\alpha+s_4 s_3 s_1 \xi)  \partial_{i'} \phi^{j'} (\alpha+s_4 s_3 s_2 s_1 \xi) \\
&  \partial_{i''}  \phi^{j''} (\alpha+ s_4 s_3 s_2 s_1 \xi  ) 
 \nabla^{j''}  \nabla^{j'} \nabla^{j} u(\bm{\phi} (\alpha+ s_4 s_3 s_2 s_1 \xi)) ds_4 ds_3 ds_2 ds_1,
 \end{split}
\end{equation}

Same as the control of $d(\x,\y)$ in the page 11-14 of \cite{Base1}, we can calculate
\begin{equation} \label{d1i}
\begin{split}
& \!\!\!\!\!\!\!\!\!\!\!\!\!\!\!  \int_\M ( \int_\M d_{1i}(\x,\y) R_\delta(\x,\y) d\mu_\y)^2 d\mu_\x \leq C  \int_\M ( \int_\M d^2 _{1i}(\x,\y) R_{\delta}(\x,\y) d\mu_{\y} )( \int_\M  R_{\delta}(\x,\y) d\mu_{\y}   ) d \mu_\x \\
 & \leq C \int_\M \int_\M d^2 _{1i}(\x,\y) R_{\delta}(\x,\y) d\mu_{\y}  d \mu_\x 
 = C \sum \limits_{j=1}^N \int_{\mathcal{O}_j}  \int_\M d^2 _{1i}(\x,\y) R_{\delta}(\x,\y) d\mu_{\y}  d \mu_\x \\
 & \leq C \sum \limits_{j=1}^N \int_{\mathcal{O}_j}  \int_{B_{\q_j}^{4\delta}} d^2 _{1i}(\x,\y) R_{\delta}(\x,\y) d\mu_{\y}  d \mu_\x
  \leq C \delta^4 \sum \limits_{j=1}^N   \int_{B_{\q_j}^{4\delta}}  | D^{3,4} u(\y)|^2  d \mu_{\y}
  \\ & \leq C \delta^4 \left \lVert u \right \rVert_{H^4(\M)},
 \end{split}
\end{equation}
where $i=1,2,3$ and 
$$D^{3,4} u(\y) =\sum \limits_{j,j',j'',j'''=1}^d | \nabla^{j'''} \nabla^{j''} \nabla^{j'} \nabla^{j} u(\x)|^2 +  \sum \limits_{j,j',j''=1}^d | \nabla^{j''} \nabla^{j'} \nabla^{j} u(\x)|^2.$$ 
Also
\begin{equation} \label{d1i2}
 \int_\M (\nabla_{\M}^{\x} \int_\M d_{1i}(\x,\y) R_\delta(\x,\y) d\mu_\y)^2 d\mu_\x \leq C  \delta \left \lVert u \right \rVert_{H^4(\M)},
\end{equation}
where $i=1,2,3.$
Hence we have completed the control of $d_{1i}, \ i=1,2,3$.  The terms remaining is $d_1, d_2, d_3$. By a simple observation we see $d_2=2d_1$. We now start to analyze $d_1$. From \eqref{rrr1}, the term we need to control is
\begin{equation} \label{dd1}
\begin{split}
\int_\M d_1(\x,\y) R_\delta(\x,\y) d \mu_\y = \int_\M  \xi^{i} \xi^{i'} \xi^{i''} \partial_i \phi^j (\bm{\alpha}) \partial_{i''} \partial_{i'} \phi^{j'} (\bm{\alpha}) \nabla^{j'} \nabla^{j} u(\bm{\alpha}) R_\delta(\x,\y) d \mu_\y .
 \end{split}
\end{equation}
We aim to control it by integration by parts. The calculation of the following term is needed:
\begin{equation} 
\begin{split}
 \partial_l^{\y} \bar{R}_\delta(\x,\y)= & \partial_l \bm{\phi} (\bm{\alpha}) \cdot \nabla_\y \bar{R}_\delta(\x,\y) 
 = \frac{1}{2\delta^2} \partial_l \bm{\phi} (\bm{\alpha}) \cdot \bm{\eta}(\x,\y) {R}_\delta(\x,\y) +d^l_{121} \\
 = &  \frac{1}{2\delta^2} \partial_l {\phi}^{k'} (\bm{\alpha})  {\xi}^{i'}(\x,\y) \partial_{i'} {\phi}^{k'} (\bm{\alpha})  {R}_\delta(\x,\y) +d^l_{121}  \\
 = & -  \frac{1}{2\delta^2} \partial_l {\phi}^{k'} (\bm{\alpha}) \partial_{i'} {\phi}^{k'} (\bm{\alpha})   {\xi}^{i'} {R}_\delta(\x,\y) +d^l_{121}  \\
 = & -   \frac{1}{2\delta^2} g_{i'l}(\bm{\alpha})  {\xi}^{i'} {R}_\delta(\x,\y) +d^l_{121} ,
 \end{split}
\end{equation}
where $d^l_{121}$ is a lower order term with the form
\begin{equation} \label{dd121}
\begin{split}
d^l_{121}= & \ \partial_l \bm{\phi} (\bm{\alpha}) \cdot (  \frac{1}{2\delta^2} \bm{\eta}(\x,\y) {R}_\delta(\x,\y) -  \nabla_\y \bar{R}_\delta(\x,\y) ) \\
= & -\frac{1}{2\delta^2} \partial_l {\phi}^i (\bm{\alpha})   \partial_{m'} \phi^i g^{m'n'} \partial_{n'} \phi^j  (x^j-y^j-\xi^{i'} \partial_{i'} \phi^j)  {R}_\delta(\x,\y) \\
= & -\frac{1}{2\delta^2} \xi^{i'} \xi^{j'} \partial_l {\phi}^i (\bm{\alpha})   \partial_{m'} \phi^i g^{m'n'} \partial_{n'} \phi^j  ( \int_0^1 \int_0^1 s \partial_{j'} \partial_{i'} \phi^j (\bm{\alpha}+ \tau s \bm{\xi} ) d\tau ds )  {R}_\delta(\x,\y) .
\end{split}
\end{equation}
By a similar argument, we have
\begin{equation}
 \partial_l^{\y} \overset{=}{R}_\delta(\x,\y)= -   \frac{1}{2\delta^2} g_{i'l}(\bm{\alpha})  {\xi}^{i'} \bar{R}_\delta(\x,\y) +d^l_{122} ,
\end{equation}
where
\begin{equation}
d^l_{122}=-\frac{1}{2\delta^2} \xi^{i'} \xi^{j'} \partial_l {\phi}^i (\bm{\alpha})   \partial_{m'} \phi^i g^{m'n'} \partial_{n'} \phi^j  ( \int_0^1 \int_0^1 s \partial_{j'} \partial_{i'} \phi^j (\bm{\alpha}+ \tau s \bm{\xi} ) d\tau ds )  \bar{R}_\delta(\x,\y).
\end{equation}

Back to \eqref{dd1}, we apply integration by parts on the parametric plane $\Omega_{J(\x)}$ of $\M$:
\begin{equation}
\begin{split}
& \int_\M   \xi^{i} \xi^{i'} \xi^{i''} \partial_i \phi^j (\bm{\alpha} ) \partial_{i''} \partial_{i'} \phi^{j'} (\bm{\alpha} ) \nabla^{j'} \nabla^{j} u(\bm{\alpha} ) R_\delta(\x,\y) d \mu_\y \\
 = & -2\delta^2 \int_\M  \xi^{i} \xi^{i'}  \partial_i \phi^j (\bm{\alpha}) \partial_{i''} \partial_{i'} \phi^{j'} (\bm{\alpha} ) \nabla^{j'} \nabla^{j} u(\bm{\alpha}) g^{i''l} (\bm{\alpha}) ( \partial_{l}^{\bm{\alpha}}  \bar{R}_\delta(\x,\y)-r_{52} ) d \mu_\y  \\
  = & -2\delta^2 \int_{\Omega_{J(\x)}}  \xi^{i} \xi^{i'}  \partial_i \phi^j (\bm{\alpha}) \partial_{i''} \partial_{i'} \phi^{j'} (\bm{\alpha} ) \nabla^{j'} \nabla^{j} u(\bm{\alpha}) g^{i''l} (\bm{\alpha})  \partial_{l}^{\bm{\alpha}}  \bar{R}_\delta(\x,\y) \sqrt{ Det \ G(\bm{\alpha})} d \bm{\alpha}  \\
&  + 2\delta^2 \int_\M  \xi^{i} \xi^{i'}  \partial_i \phi^j (\bm{\alpha}) \partial_{i''} \partial_{i'} \phi^{j'} (\bm{\alpha} ) \nabla^{j'} \nabla^{j} u(\bm{\alpha}) g^{i''l} (\bm{\alpha}) d^l_{121}  d \mu_\y \\
= & -2\delta^2 \int_{\Omega_{J(\x)}}  \xi^{i} \xi^{i'} \partial_{l} \big(  \partial_i \phi^j (\bm{\alpha}) \partial_{i''} \partial_{i'} \phi^{j'} (\bm{\alpha} ) \nabla^{j'} \nabla^{j} u(\bm{\alpha}) g^{i''l} (\bm{\alpha})    \sqrt{ det \ G(\bm{\alpha})} \big) \bar{R}_{\delta}(\x,\y) d \bm{\alpha} \\
& +2\delta^2 \int_{\Omega_{J(\x)}}   \xi^{i'}  \partial_l \phi^j (\bm{\alpha}) \partial_{i''} \partial_{i'} \phi^{j'} (\bm{\alpha} ) \nabla^{j'} \nabla^{j} u(\bm{\alpha}) g^{i''l} (\bm{\alpha})   \bar{R}_\delta(\x,\y) \sqrt{ det \ G(\bm{\alpha})} d \bm{\alpha} \\
& +2\delta^2 \int_{\Omega_{J(\x)}}  \xi^{i}   \partial_i \phi^j (\bm{\alpha}) \partial_{i''} \partial_{l} \phi^{j'} (\bm{\alpha} ) \nabla^{j'} \nabla^{j} u(\bm{\alpha}) g^{i''l} (\bm{\alpha})   \bar{R}_\delta(\x,\y) \sqrt{ det \ G(\bm{\alpha})} d \bm{\alpha}  \\
&  \!\!\!\!\!\!\!\! -2\delta^2 \int_{\partial \Omega_{J(\x)}}  \xi^{i} \xi^{i'}   \partial_i \phi^j (\bm{\alpha}) \partial_{i''} \partial_{i'} \phi^{j'} (\bm{\alpha} ) \nabla^{j'} \nabla^{j} u(\bm{\alpha}) g^{i''l} (\bm{\alpha})    \sqrt{ det \ G(\bm{\alpha})} n_{\Omega}^l(\bm{\alpha})  \bar{R}_{\delta}(\x,\y) d \bm{\alpha} \\
 & + 2\delta^2 \int_\M  \xi^{i} \xi^{i'}  \partial_i \phi^j (\bm{\alpha}) \partial_{i''} \partial_{i'} \phi^{j'} (\bm{\alpha} ) \nabla^{j'} \nabla^{j} u(\bm{\alpha}) g^{i''l} (\bm{\alpha}) d^l_{121}  d \mu_\y \\
 = & d_{21}+d_{22}+d_{23}+d_{24}+d_{25} = d_{21}+2d_{22}+d_{24}+d_{25},
 \end{split}
\end{equation}
Still, $\n_{\Omega}(\bm{\alpha})$ is the unit outward normal vector of $\Omega_{J(\x)} $ at $\bm{\alpha} \in \partial \Omega_{J(\y)}$, $\sqrt{ det \ G(\bm{\alpha})}$ is the Jacobian matrix defined in \eqref{divF} such that $d \mu_{\y}=\sqrt{ det \ G(\bm{\alpha})} d \bm{\alpha}$. We are then ready to control the error terms $d_{2i}, i=1,2,3,4,5$. For $d_{21}$, we have

\begin{equation} \label{d21cont1}
\begin{split}
& \left \lVert d_{21} \right \lVert^2_{L^2(\M)}  \\
 = & 4 \delta^4 \int_{\M}  \Big(
 \int_{\Omega_{J(\x)}}  \xi^{i} \xi^{i'} \partial_{l} \big(  \partial_i \phi^j (\bm{\alpha}) \partial_{i''} \partial_{i'} \phi^{j'} (\bm{\alpha} ) \nabla^{j'} \nabla^{j} u(\bm{\alpha}) g^{i''l} (\bm{\alpha})    \sqrt{ det \ G(\bm{\alpha})} \big)
\\ 
& \bar{R}_{\delta}(\x,\y) d \bm{\alpha}   \Big)^2  d \mu_{\x} \\
  = & 4 \delta^4 \sum \limits_{p=1}^N \int_{\mathcal{O}_p}  \Big(
 \int_{\Omega_{p}}  \xi^{i} \xi^{i'} \partial_{l} \big(  \partial_i \phi^j (\bm{\alpha}) \partial_{i''} \partial_{i'} \phi^{j'} (\bm{\alpha} ) \nabla^{j'} \nabla^{j} u(\bm{\alpha}) g^{i''l} (\bm{\alpha})    \sqrt{ det \ G(\bm{\alpha})} \big)
 \\
 & \bar{R}_{\delta}(\x,\y) d \bm{\alpha}   \Big)^2  d \mu_{\x} \\
  \leq  &
4 \delta^8  \sum \limits_{p=1}^N \int_{\mathcal{O}_p}  \Big(
 \int_{\Omega_{p}}  \max \limits_{i,i'}  \big| \partial_{l} \big(  \partial_i \phi^j (\bm{\alpha}) \partial_{i''} \partial_{i'} \phi^{j'} (\bm{\alpha} ) \nabla^{j'} \nabla^{j} u(\bm{\alpha}) g^{i''l} (\bm{\alpha})    \sqrt{ det \ G(\bm{\alpha})} \big) \big| 
 \\
 & \bar{R}_{\delta}(\x,\y) d \bm{\alpha}   \Big)^2  d \mu_{\x} \\
  \leq &
 4 \delta^8  \sum \limits_{p=1}^N \int_{\mathcal{O}_p}   \Big(
 \int_{\Omega_{p}}  \max \limits_{i,i'}  \big| \partial_{l} \big(  \partial_i \phi^j (\bm{\alpha}) \partial_{i''} \partial_{i'} \phi^{j'} (\bm{\alpha} ) \nabla^{j'} \nabla^{j} u(\bm{\alpha}) g^{i''l} (\bm{\alpha})    \sqrt{ det \ G(\bm{\alpha})} \big) \big|^2 
 \\
 & \bar{R}_{\delta}(\x,\y) d \bm{\alpha}   \Big)  
 \Big(   \int_{\Omega_{J(\x)}}   \bar{R}_{\delta}(\x,\y) d \bm{\alpha}     \Big)  d \mu_{\x} \\
   \leq &
 4 \delta^8  \sum \limits_{p=1}^N \int_{\mathcal{O}_p}   \Big(
 \int_{\Omega_{p}}  \max \limits_{i,i'}  \big| \partial_{l} \big(  \partial_i \phi^j (\bm{\alpha}) \partial_{i''} \partial_{i'} \phi^{j'} (\bm{\alpha} ) \nabla^{j'} \nabla^{j} u(\bm{\alpha}) g^{i''l} (\bm{\alpha})    \sqrt{ det \ G(\bm{\alpha})} \big) \big|^2 
 \\
 & \bar{R}_{\delta}(\x,\y) d \bm{\alpha}   \Big) d \mu_{\x} \\
 \leq & C \delta^8   \sum \limits_{p=1}^N  \int_{\Omega_{p}} 
 \big( \int_{\mathcal{O}_p}   \bar{R}_{\delta}(\x,\y)  d \mu_{\x} \big)  \max \limits_{i,i'}  \big| \partial_{l} \big(  \partial_i \phi^j (\bm{\alpha}) \partial_{i''} \partial_{i'} \phi^{j'} (\bm{\alpha} ) \nabla^{j'} \nabla^{j} u(\bm{\alpha}) g^{i''l} (\bm{\alpha})    
 \\ & 
 \sqrt{ det \ G(\bm{\alpha})} \big) \big|^2  d \bm{\alpha}  \\
 \leq &  C \delta^8   \sum \limits_{p=1}^N    \int_{\Omega_{p}} 
 \max \limits_{i,i'}  \big| \partial_{l} \big(  \partial_i \phi^j (\bm{\alpha}) \partial_{i''} \partial_{i'} \phi^{j'} (\bm{\alpha} ) \nabla^{j'} \nabla^{j} u(\bm{\alpha}) g^{i''l} (\bm{\alpha})    \sqrt{ det \ G(\bm{\alpha})} \big) \big|^2  d \bm{\alpha}  \\
 \leq & C \delta^8  \sum \limits_{p=1}^N   \left \lVert u \right \rVert_{H^2(\Omega_p)}^2 
 \leq  C \delta^8 \left \lVert u \right \rVert_{H^3(\M)}^2, 
 % need modification
 % I  see: change all the domain back to $\M$ instead of $\Omega$ !!!!
\end{split}
\end{equation}
by the similar argument, we discover
\begin{equation} \label{d21cont2}
\begin{split}
& \left \lVert \nabla d_{21} \right \lVert^2_{L^2(\M)}  \\
 \leq & C \delta^4 \int_{\M}  \Big(
 \int_{\Omega_{J(\x)}} (\nabla_{\M}^{\x} \xi^{i} \xi^{i'}) \partial_{l} \big(  \partial_i \phi^j (\bm{\alpha}) \partial_{i''} \partial_{i'} \phi^{j'} (\bm{\alpha} ) \nabla^{j'} \nabla^{j} u(\bm{\alpha}) g^{i''l} (\bm{\alpha})    \sqrt{ det \ G(\bm{\alpha})} \big) 
 \\ & 
 \bar{R}_{\delta}(\x,\y) d \bm{\alpha}   \Big)^2  d \mu_{\x} 
 + \int_{\M} \int_{\Omega_{J(\x)}}    \Big( \xi^{i} \xi^{i'} \partial_{l} \big(  \partial_i \phi^j (\bm{\alpha}) \partial_{i''} \partial_{i'} \phi^{j'} (\bm{\alpha} ) \nabla^{j'} \nabla^{j} u(\bm{\alpha}) g^{i''l} (\bm{\alpha}) 
 \\ &
    \sqrt{ det \ G(\bm{\alpha})} \big) ( \nabla_{\M}^{\x} \bar{R}_{\delta}(\x,\y) ) d \bm{\alpha}   \Big)^2  d \mu_{\x}  \\
  \leq  &
C \delta^4 \int_{\M}  \Big(
 \int_{\Omega_{J(\x)}} \delta  \max \limits_{i,i'}  \big| \partial_{l} \big(  \partial_i \phi^j (\bm{\alpha}) \partial_{i''} \partial_{i'} \phi^{j'} (\bm{\alpha} ) \nabla^{j'} \nabla^{j} u(\bm{\alpha}) g^{i''l} (\bm{\alpha})    \sqrt{ det \ G(\bm{\alpha})} \big) \big| 
 \\ &
\!\!\!\!  \bar{R}_{\delta}(\x,\y) d \bm{\alpha}   \Big)^2  d \mu_{\x}  + C \delta^4 \int_{\M}  \Big(
 \int_{\Omega_{J(\x)}}  \delta^2 \max \limits_{i,i'}  \big| \partial_{l} \big(  \partial_i \phi^j (\bm{\alpha}) \partial_{i''} \partial_{i'} \phi^{j'} (\bm{\alpha} ) \nabla^{j'} \nabla^{j} u(\bm{\alpha}) g^{i''l} (\bm{\alpha})  
 \\ &
   \sqrt{ det \ G(\bm{\alpha})} \big) \big| \  \frac{1}{2\delta} {R}_{\delta}(\x,\y) \ d \bm{\alpha}   \Big)^2  d \mu_{\x} \\
  \leq &
 C\delta^6  \sum \limits_{p=1}^N   
 \int_{\mathcal{O}_p}  \Big(
 \int_{\Omega_p}   \max \limits_{i,i'}  \big| \partial_{l} \big(  \partial_i \phi^j (\bm{\alpha}) \partial_{i''} \partial_{i'} \phi^{j'} (\bm{\alpha} ) \nabla^{j'} \nabla^{j} u(\bm{\alpha}) g^{i''l} (\bm{\alpha})    \sqrt{ det \ G(\bm{\alpha})} \big) \big|
 \\ &
  (\bar{R}_{\delta}(\x,\y) + {R}_{\delta}(\x,\y)  ) d \bm{\alpha}   \Big)^2  d \mu_{\x} \\ 
\leq & C \delta^6 \left \lVert u \right \rVert_{H^3(\M)}^2.
% need modification
\end{split}
\end{equation}

For $d_{22}$ we apply integration by parts again:
\begin{equation}
\begin{split}
& d_{22}(\x) \\
= & 2\delta^2  \int_{\Omega_{J(\x)}}   \xi^{i'}  \partial_l \phi^j (\bm{\alpha}) \partial_{i''} \partial_{i'} \phi^{j'} (\bm{\alpha} ) \nabla^{j'} \nabla^{j} u(\bm{\alpha}) g^{i''l} (\bm{\alpha})   \bar{R}_\delta(\x,\y) \sqrt{ det \ G(\bm{\alpha})} d \bm{\alpha} \\
= &  -4 \delta^4 \int_{\partial \Omega_{J(\x)}}     \partial_l \phi^j (\bm{\alpha}) \partial_{i''} \partial_{i'} \phi^{j'} (\bm{\alpha} ) \nabla^{j'} \nabla^{j} u(\bm{\alpha}) g^{i''l} (\bm{\alpha}) g^{i'l'}(\bm{\alpha}) n_{\Omega}^{l'} (\bm{\alpha})  \overset{=}{R}_\delta(\x,\y) \sqrt{ det \ G(\bm{\alpha})} d \bm{\alpha}  \\
& + 4\delta^4  \int_{\Omega_{J(\x)}}  \partial_{l'} \Big(  \partial_l \phi^j (\bm{\alpha}) \partial_{i''} \partial_{i'} \phi^{j'} (\bm{\alpha} ) \nabla^{j'} \nabla^{j} u(\bm{\alpha}) g^{i''l} (\bm{\alpha})  g^{i'l'}(\bm{\alpha}) \sqrt{ det \ G(\bm{\alpha})} \Big)  \overset{=}{R}_\delta(\x,\y) d \bm{\alpha}  \\
& + 4 \delta^4  \int_{\Omega_{J(\x)}}   \partial_l \phi^j (\bm{\alpha}) \partial_{i''} \partial_{i'} \phi^{j'} (\bm{\alpha} ) \nabla^{j'} \nabla^{j} u(\bm{\alpha}) g^{i''l} (\bm{\alpha}) g^{i'l'}  d_{122}^{l'}(\bm{\alpha})  \sqrt{ det \ G(\bm{\alpha})} d \bm{\alpha} \\
= &  -4 \delta^4 \int_{\partial \Omega_{J(\x)}}     \partial_l \phi^j (\bm{\alpha}) \partial_{i''} \partial_{i'} \phi^{j'} (\bm{\alpha} ) \nabla^{j'} \nabla^{j} u(\bm{\alpha}) g^{i''l} (\bm{\alpha}) g^{i'l'}(\bm{\alpha}) n_{\Omega}^{l'} (\bm{\alpha})  \overset{=}{R}_\delta(\x,\y) \sqrt{ det \ G(\bm{\alpha})} d \bm{\alpha} \\
& +  4\delta^4  \int_{\M}  \frac{1}{\sqrt{ det \ G(\bm{\alpha})}} \partial_{l'} \Big(  \partial_l \phi^j (\bm{\alpha}) \partial_{i''} \partial_{i'} \phi^{j'} (\bm{\alpha} ) \nabla^{j'} \nabla^{j} u(\bm{\alpha}) g^{i''l} (\bm{\alpha})  g^{i'l'}(\bm{\alpha}) \sqrt{ det \ G(\bm{\alpha})} \Big)  \\
&   \overset{=}{R}_\delta(\x,\y) d \mu_{\y} 
+ 4 \delta^4  \int_{\M}   \partial_l \phi^j (\bm{\alpha}) \partial_{i''} \partial_{i'} \phi^{j'} (\bm{\alpha} ) \nabla^{j'} \nabla^{j} u(\bm{\alpha}) g^{i''l} (\bm{\alpha}) g^{i'l'} (\bm{\alpha}) d_{122}^{l'}(\bm{\alpha}) d \mu_{\y} \\
= & \ d_{221}+d_{222}+d_{223}.
 \end{split}
 \end{equation}
Here $ d_{221}$ is a term supported in the layer of $\partial \M$ with width $2\delta$ due to the support of $ \bar{R}$, and

 \begin{equation} \label{d0221}
 \begin{split}
& \left \lVert d_{221} \right \rVert^2_{L^2(\M)} \\
= &  16 \delta^8  \int_{\M} \Big(  \int_{\partial \Omega_{J(\x)}}     \partial_l \phi^j (\bm{\alpha}) \partial_{i''} \partial_{i'} \phi^{j'} (\bm{\alpha} ) \nabla^{j'} \nabla^{j} u(\bm{\alpha}) g^{i''l} (\bm{\alpha}) g^{i'l'}(\bm{\alpha}) n_{\Omega}^{l'} (\bm{\alpha})  \overset{=}{R}_\delta(\x,\y)
\\ &
 \sqrt{ det \ G(\bm{\alpha})} d \bm{\alpha} \Big)^2 d \mu_{\x}   \\
 \leq  & C \delta^8  \int_{\M} \int_{\partial \Omega_{J(\x)}}   (  \partial_l \phi^j (\bm{\alpha}) \partial_{i''} \partial_{i'} \phi^{j'} (\bm{\alpha} ) \nabla^{j'} \nabla^{j} u(\bm{\alpha}) g^{i''l}  (\bm{\alpha}) g^{i'l'}(\bm{\alpha}) n_{\Omega}^{l'} (\bm{\alpha}))^2  \ \overset{=}{R}_\delta(\x,\y) 
 \\ &
  det \ G(\bm{\alpha}) d \bm{\alpha}  \   ( \int_{\partial \Omega_{J(\x)}}  \overset{=}{R}_\delta(\x,\y) d \alpha ) d \mu_{\x} \\
\leq & C \delta^7  \sum \limits_{p=1}^N     \int_{\mathcal{O}_p} \int_{\partial \Omega_{p}}   (  \partial_l \phi^j (\bm{\alpha}) \partial_{i''} \partial_{i'} \phi^{j'} (\bm{\alpha} ) \nabla^{j'} \nabla^{j} u(\bm{\alpha}) g^{i''l} (\bm{\alpha}) g^{i'l'}(\bm{\alpha}) n_{\Omega}^{l'} (\bm{\alpha}))^2  \overset{=}{R}_\delta(\x,\y) 
\\ &
 det \ G(\bm{\alpha}) d \bm{\alpha}  d \mu_{\x}, \\
 \leq &    C \delta^7  \sum \limits_{p=1}^N   \int_{\partial \Omega_{p}}   (  \int_{\mathcal{O}_p}    \overset{=}{R}_\delta(\x,\y)   d \mu_{\x} )   (  \partial_l \phi^j (\bm{\alpha}) \partial_{i''} \partial_{i'} \phi^{j'} (\bm{\alpha} ) \nabla^{j'} \nabla^{j} u(\bm{\alpha}) g^{i''l} (\bm{\alpha}) g^{i'l'}(\bm{\alpha}) )^2 
\\ & (n_{\Omega}^{l'} (\bm{\alpha}) )^2 det \ G(\bm{\alpha}) d \bm{\alpha}   \\  
\leq & C  \delta^7 \sum \limits_{p=1}^N   \int_{\partial \Omega_{p}}  (  \partial_l \phi^j (\bm{\alpha}) \partial_{i''} \partial_{i'} \phi^{j'} (\bm{\alpha} ) \nabla^{j'} \nabla^{j} u(\bm{\alpha}) g^{i''l} (\bm{\alpha}) g^{i'l'}(\bm{\alpha}) n_{\Omega}^{l'} (\bm{\alpha}))^2  det \ G(\bm{\alpha}) d \bm{\alpha}  \\
\leq & C \delta^7  \sum \limits_{p=1}^N \left \lVert u \right \rVert^2_{H^3(\Omega_p)}
\leq   C \delta^7 \left \lVert u \right \rVert^2_{H^3(\M)} ,
% need modification
 \end{split}
 \end{equation}
 
Similarly, we have
  \begin{equation} \label{d00221}
 \begin{split}
& \left \lVert \nabla d_{221} \right \rVert^2_{L^2(\M)}   \\
 =  &  16 \delta^8   \sum \limits_{p=1}^N     \int_{\mathcal{O}_p} \int_{\partial \Omega_{p}}      \partial_l \phi^j (\bm{\alpha}) \partial_{i''} \partial_{i'} \phi^{j'} (\bm{\alpha} ) \nabla^{j'} \nabla^{j} u(\bm{\alpha}) g^{i''l} (\bm{\alpha}) g^{i'l'}(\bm{\alpha}) n_{\Omega}^{l'} (\bm{\alpha}) ( \nabla_{\M}^{\x}  \overset{=}{R}_\delta(\x,\y) ) \\
 & \sqrt{ det \ G(\bm{\alpha})} d \bm{\alpha} \Big)^2 d \mu_{\x}  \\
 \leq & C \delta^5 \left \lVert u \right \rVert^2_{H^3(\M)} .
% need modification
 \end{split}
 \end{equation}

 In addition, let us verify the property \eqref{symme} of $d_{221}$. In fact, for any $f_1 \in H^1(\M)$, we have
 \begin{equation} \label{d000221}
 \begin{split}
& \int_{\M}  d_{221}(\x) \ f_1(\x) d \mu_\x \\
= &  -4 \delta^4 \int_{\M} f_1(\x)  \int_{\partial \Omega_{J(\x)}}     \partial_l \phi^j (\bm{\alpha}) \partial_{i''} \partial_{i'} \phi^{j'} (\bm{\alpha} ) \nabla^{j'} \nabla^{j} u(\bm{\alpha}) g^{i''l} (\bm{\alpha}) g^{i'l'}(\bm{\alpha}) n_{\Omega}^{l'} (\bm{\alpha})  \overset{=}{R}_\delta(\x,\y) 
\\ & 
\sqrt{ det \ G(\bm{\alpha})} d \bm{\alpha} d\mu_{\x} \\
= & -4 \delta^4 \sum \limits_{p=1}^N     \int_{\mathcal{O}_p} \int_{\partial \Omega_{p}}  
  \partial_l \phi^j (\bm{\alpha}) \partial_{i''} \partial_{i'} \phi^{j'} (\bm{\alpha} ) \nabla^{j'} \nabla^{j} u(\bm{\alpha}) g^{i''l} (\bm{\alpha}) g^{i'l'}(\bm{\alpha}) n_{\Omega}^{l'} (\bm{\alpha})  \sqrt{ det \ G(\bm{\alpha})} 
  \\ &
   (f_1(\x) \overset{=}{R}_\delta(\x,\y)) d \bm{\alpha} d\mu_{\x}  \\
  = & -4 \delta^4 \sum \limits_{p=1}^N   \int_{\partial \Omega_{p}}  (  \int_{\mathcal{O}_p} f_1(\x) \overset{=}{R}_\delta(\x,\y) d\mu_{\x} ) 
  \partial_l \phi^j (\bm{\alpha}) \partial_{i''} \partial_{i'} \phi^{j'} (\bm{\alpha} ) \nabla^{j'} \nabla^{j} u(\bm{\alpha}) g^{i''l} (\bm{\alpha}) g^{i'l'}(\bm{\alpha})  
  \\ & 
  n_{\Omega}^{l'} (\bm{\alpha})  \sqrt{ det \ G(\bm{\alpha})}  d \bm{\alpha} \\
  =& -4 \delta^4 \sum \limits_{p=1}^N   \int_{\partial \Omega_{p}}   \overset{=}{f}_1(\y)  
  \partial_l \phi^j (\bm{\alpha}) \partial_{i''} \partial_{i'} \phi^{j'} (\bm{\alpha} ) \nabla^{j'} \nabla^{j} u(\bm{\alpha}) g^{i''l} (\bm{\alpha}) g^{i'l'}(\bm{\alpha}) n_{\Omega}^{l'} (\bm{\alpha})  \sqrt{ det \ G(\bm{\alpha})}  d \bm{\alpha} \\
  \leq & C   \delta^4 \int_{\partial \M} \overset{=}{f}_1(\y) | \nabla^2 u(\y)| d \mu_{\y}
  \leq   C \delta^4 \left \lVert  \overset{=}{f}_1 \right \rVert_{L^2(\partial \M)}  \left \lVert  u \right \rVert_{H^2(\partial \M)}
  \leq C \delta^4 \left \lVert  \overset{=}{f}_1 \right \rVert_{H^1( \M)}  \left \lVert  u \right \rVert_{H^3( \M)}.
\end{split}
\end{equation}
 
For the next terms, $d_{222}$ and $d_{223}$ are two lower order terms defined on the interior, where the control is by the similar argument of $d_{21}$. 

 The control for $d_{24}$ is similar as $d_{221}$, where two more integration by parts than $d_{221}$ are needed, thus heavy calculations are need in the control of $d_{24}$. However, as a consequence, $d_{24}$ satisfy the same property of $d_{221}$.
 
% \begin{equation}
 %\begin{split}
%& \int_{\M}  d_{24}(\x) \ f_1(\x) d \mu_\x \\
%= & -2\delta^2 \int_{\M}  f_1(\x) \int_{\partial \Omega_{J(\x)}}  \xi^{i} \xi^{i'}   \partial_i \phi^j (\bm{\alpha}) \partial_{i''} \partial_{i'} \phi^{j'} (\bm{\alpha} ) \nabla^{j'} \nabla^{j} u(\bm{\alpha}) g^{i''l} (\bm{\alpha})    \sqrt{ det \ G(\bm{\alpha})} n_{\Omega}^l(\bm{\alpha})  \bar{R}_{\delta}(\x,\y) d \bm{\alpha} 
%\end{split}
 %\end{equation}
 
 $d_{25}$ is obviously a lower order term due to the calculation of $d_{121}$ in \eqref{dd121}.
 
Hence we have finished the control on the term $d_{1}$ and $d_2$. For the term $d_3$, we denote
\begin{equation}
r_{11}=\frac{1}{\delta^2} \int_{\M} d_3(\x,\y) R_{\delta}(\x,\y) d\y
=\frac{1}{6 \delta^2} \int_\M   \eta^i \eta^j \eta^k (\nabla^i \nabla^j \nabla^k u(\y))  R_\delta (\x,\y) d \mu_\y
\end{equation}

Similar as $r_1$, we can decompose $r_{11}$ into
$r_{11}=\sum \limits_{i=2}^7 r_{1i}$, where

\begin{equation}
\!\!\!\!\!\!\!\!\!\!\!\!\!\!\!\!\!\!\!\!\!\!  r_{12}=\frac{1}{6 \delta^2} ( \int_\M   \eta^i \eta^j \eta^k (\nabla^i \nabla^j \nabla^k u(\y))  R_\delta (\x,\y) d \mu_\y 
-  \int_\M 3 \eta^i \eta^j (\nabla^i \nabla^j \nabla^k u(\y) ) \nabla^k \bar{R}_\delta (\x,\y) d \mu_\y ),
\end{equation}

\begin{equation}
\!\!\!\!\!\!\!\!\!\!\!\!\!\!\!\!\!\!\!\!\!\!\!\!\!\!\!\!\!\!\!\!\!\!\!\!\!\!\!\!\!\!\!\!   r_{13}(\x)= \frac{1}{2 \delta^2} (\int_\M \eta^i \eta^j (\nabla^i \nabla^j \nabla^k u(\y) ) \nabla^k \bar{R}_\delta (\x,\y) d \mu_\y+ \int_\M div \ (\eta^i \eta^j (\nabla^i \nabla^j \nabla u(\y) )) \bar{R}_\delta(\x,\y) d \mu_\y  ),
\end{equation}

\begin{equation}
\!\!\!\!\!\!\!\!\!\!\!\!\!\!\!\!\!\!\!\!\!\! r_{14}(\x)= \frac{1}{2 \delta^2} (2 \int_\M \eta^i (\nabla^i \nabla^j \nabla^j u(\y) ) \bar{R}_\delta(\x,\y) d \mu_\y -\int_\M div \ (\eta^i \eta^j (\nabla^i \nabla^j \nabla u(\y) )) \bar{R}_\delta(\x,\y) d \mu_\y ) ,
\end{equation}

\begin{equation}
r_{15}(\x)= - \frac{1}{\delta^2} ( \int_\M \eta^i (\nabla^i \nabla^j \nabla^j u(\y) ) \bar{R}_\delta(\x,\y) d \mu_\y-  \int_\M  ( \nabla^i \nabla^j \nabla^j u(\y) ) \nabla^i \overset{=}{R}_\delta(\x,\y) d \mu_\y  ) ,
\end{equation}

\begin{equation}
r_{16}(\x)= \frac{1}{\delta^2} ( \int_\M  ( \nabla^i \nabla^j \nabla^j u(\y) ) \nabla^i \overset{=}{R}_\delta(\x,\y) d \mu_\y   + \int_\M  (\nabla^i \nabla^i \nabla^j \nabla^j u(\y) ) \overset{=}{R}_\delta(\x,\y) d \mu_\y  ) ,
\end{equation}

\begin{equation}
r_{17}(\x)=- \frac{1}{\delta^2} \int_\M  (\nabla^i \nabla^i \nabla^j \nabla^j u(\y) ) \overset{=}{R}_\delta(\x,\y) d \mu_\y   .
\end{equation}

The control for  $r_{12}$ and $r_{15}$ is the same as $r_2$ in \cite{Base1}. For $r_{13}$, we can calculate
\begin{equation}
\begin{split}
r_{13}=\int_{\partial \M} (\eta^i \eta^j n^k (\nabla^i \nabla^j \nabla^k u(\y) ) ) \bar{R}_\delta(\x,\y) d \tau_\y 
\end{split}
\end{equation}
Since $\eta^i=\xi^{i'} \partial_{i'} \phi^i $, we can refer to the control of the term \eqref{lm461} in the proof of lemma \ref{baselemma} to deduce
\begin{equation}
\int_\M r_{13} (\x) f_1(\x) d \mu_\x \leq  C \delta^2 ( \left \lVert f_1 \right \rVert_{H^1(\M)} + \left \lVert \bar{f}_1 \right \rVert_{H^1(\M)} )  \left \lVert u \right \rVert_{H^4(\M)}  ,
\end{equation}

and the same argument hold for $r_{16}$. For $r_{14}$, since $\partial_{i'} {\xi^l}=-\delta_{i'}^l$, we have
\begin{equation}
\begin{split}
 div & \ (\eta^i \eta^j (\nabla^i \nabla^j \nabla u(\y) )  \\
= & \frac{1}{\sqrt{det \ G}} \partial_{i'} ( \sqrt{det \ G} g^{i'j'} (\partial_{j'} \phi^k) \eta^i \eta^j \nabla^i \nabla^j \nabla^k u(\y)) \\
= & \frac{ \xi^l}{\sqrt{det \ G}} \partial_{i'} (\sqrt{det \ G} g^{i'j'} (\partial_{j'} \phi^k) \eta^i (\partial_l \phi^j)  \nabla^i \nabla^j \nabla^k u(\y)) \\
& - g^{i'j'} (\partial_{j'} \phi^k) \eta^i (\partial_{i'} \phi^j) ( \nabla^i \nabla^j \nabla^k u(\y)) \\
=& \frac{ \xi^l \xi^{k'} }{\sqrt{det \ G}} \partial_{i'} (\sqrt{det \ G} g^{i'j'} (\partial_{j'} \phi^k) (\partial_{k'} \phi^i)  (\partial_l \phi^j) \nabla^i \nabla^j \nabla^k u(\y)) \\
& - 2 g^{i'j'} (\partial_{j'} \phi^k) \eta^i (\partial_{i'} \phi^j)( \nabla^i \nabla^j \nabla^k u(\y)) \\
= & \frac{ \xi^l \xi^{k'} }{\sqrt{det \ G}} \partial_{i'} (\sqrt{det \ G} g^{i'j'} (\partial_{j'} \phi^k) (\partial_{k'} \phi^i)  (\partial_l \phi^j) \nabla^i \nabla^j \nabla^k u(\y)) \\
& -2  \eta^i (\nabla^i \nabla^j \nabla^j u(\y) ) ,
\end{split}
\end{equation}
where the last inequality results from the equation $(31)$ of \cite{Base1}. This implies
\begin{equation}
\begin{split}
r_{14}(\x)= & \int_\M div \ (\eta^i \eta^j (\nabla^i \nabla^j \nabla u(\y) ) \bar{R}_\delta(\x,\y) d \mu_\y +2 \int_\M \eta^i (\nabla^i \nabla^j \nabla^j u(\y) ) \bar{R}_\delta(\x,\y) d \mu_\y \\
= & \int_\M \frac{ \xi^l \xi^{k'} }{\sqrt{det \ G}} \partial_{i'} (\sqrt{det \ G} g^{i'j'} (\partial_{j'} \phi^k) (\partial_{k'} \phi^i)  (\partial_l \phi^j) \nabla^i \nabla^j \nabla^k u(\y))  \bar{R}_\delta(\x,\y)  d\mu_\y .
\end{split}
\end{equation}
By the same calculation as the term $d_{21}$ we can conclude 
\begin{equation}
\left \lVert r_{14} \right \rVert_{L^2(\M)}  \leq  C \delta^2   \left \lVert u \right \rVert_{H^4(\M)},
\end{equation}
\begin{equation}
\left \lVert \nabla r_{14} \right \rVert_{L^2(\M)}  \leq  C \delta   \left \lVert u \right \rVert_{H^4(\M)}.
\end{equation}
And similarly, we have the same bound for $r_{17}$ as $r_{14}$. \\
Thereafter, we have completed the control on all the terms that composes $r_1$.

\bibliographystyle{abbrv}

\bibliography{reference}

\end{document}